\theoremstyle{plain}
\newtheorem{lemma}{Lemma}
\newtheorem{theorem}{Theorem}
\theoremstyle{remark}
\theoremstyle{definition}
\newtheorem{example}{Example}
\renewcommand{\maketitle}{
	\begin{center}
    \rule[.2em]{\textwidth}{0.353mm}
		\begin{minipage}[m]{0.35\textwidth}
			{\scriptsize
				\begin{center}
					
			\end{center}}
		\end{minipage}\hfill
		\begin{minipage}[m]{0.65\textwidth}
			
		\end{minipage}
		\rule[1em]{\textwidth}{.353mm}
		\baselineskip=0.30in
		{\Large\bfseries \@title} \par
		\vspace{5mm}
		\baselineskip=0.2in
		{\large\bfseries \@author}\par
		\vspace{1mm}
		{\it \@address} \par
		{\small\tt \@email} \par
		\vspace{3mm}
		{\small (Received \@date)} \par
	\end{center}
	\vspace{3mm}
}
\newcommand{\address}[1]{\def\@address{#1}}
\newcommand{\email}[1]{\def\@email{#1}}
\newcommand{\acknowledgment}[1]{\vspace{5mm}\singlespacing
	{\noindent\textbf{\textit{Acknowledgment\/}:} #1}
}
\newcommand{\NN}{\mathbb{N}}
\newcommand{\ZZ}{\mathbb{Z}}
\newcommand{\diam}{\mathrm{diam}}
\newcommand{\degr}{\mathcal{D}}
\newcommand{\Tr}{\mathrm{Tr}}
\newcommand{\rk}{\mathrm{rk}}
\newcommand{\Ann}{\mathrm{Ann}}
\newcommand{\Annl}{\mathrm{Ann}_L}
\newcommand{\Annr}{\mathrm{Ann}_R}
\newcommand{\DEF}[1]{\emph{#1}}
\title{The Wiener index and the Wiener Complexity of the zero-divisor graph of a ring}
\author{David Dol\v zan$^{a,}$}
\address{$^a$D.~Dol\v zan:~Department of Mathematics, Faculty of Mathematics
and Physics, University of Ljubljana, Jadranska 19, SI-1000 Ljubljana, Slovenia, and Institute of Mathematics, Physics and Mechanics, Jadranska 19, SI-1000 Ljubljana, Slovenia}
\email{david.dolzan@fmf.uni-lj.si}
\date{\today}
\begin{document}

\maketitle

\begin{abstract}
We calculate the Wiener index of the zero-divisor graph of a finite semisimple ring. We also calculate the Wiener complexity of the zero-divisor graph of a finite simple ring and find an upper bound for the Wiener complexity in the semisimple case.
\end{abstract}

\onehalfspacing

 \section{Introduction}

\bigskip

 The Wiener index of a graph, introduced by Wiener in \cite{Wiener}, turns out to be among the most important of the graph indices. It is defined as $W(\Gamma)=\frac{1}{2}\sum_{u,v \in V(\Gamma)}d(u,v)$. Wiener discovered that the boiling points of alkanes and the sum of the distances between any pair of vertices in its molecular structure graph have a close connection. Later, a strong correlation between the Wiener index and the chemical properties of a compound was found - for example, critical points in general (see \cite{stiel}), the density, the surface tension, and viscosity of compounds liquid phase (see \cite{rouvray}) and the van der Waals surface area of the molecule (see \cite{gutman}). Nowadays, this index is used for the preliminary screening of drug molecules \cite{agrawal}.  See also \cite{MR1843259} and the extensive list of references therein, as well as \cite{MR3381136} for the many applications of the Wiener index in chemistry. 

A related notion to the Wiener index is the Wiener complexity of a graph which was defined in \cite{Klavzar1, Klavzar2}. It denotes the number of different transmissions of vertices in a graph and  its importance stems from location theory where vertices with extremal transmission are studied, since they are target locations for facilities (see \cite{bresar, skreko, smart}).

In this paper, we study the Wiener index and the Wiener complexity of certain graphs that arise from algebraic structures. Namely, our setting is the (undirected) zero-divisor graph of a finite (possibly non-commutative) ring. Zero-divisor graphs of various algebraic structures have proved to be very useful in revealing the algebraic properties through the graph-theoretical properties of the prescribed graphs. It all began in 1988, when Beck \cite{MR944156} introduced the notion of the zero-divisor graph of a commutative ring. Later, Anderson and Livingston \cite{MR1700509} slightly adjusted the definition of the zero-divisor graph in order to be able to investigate the zero-divisor structure of commutative rings. The definition was later extended to study the non-commutative rings in \cite{MR2201052} and \cite{MR2037657}. Different authors then further extended this concept to semigroups  \cite{MR1911724}, nearrings \cite{MR2177522} and semirings \cite{MR2460702}. Recently, Wiener indices of some graphs arising from algebraical structures have been studied (see  \cite{asirnew1} and \cite{asirnew2} for a survey of the recent results). For example, the Wiener index of the zero-divisor graph of a finite commutative ring was studied in \cite{MR4377155}, while in \cite{asir, MR4257055} it was studied in the special case of the ring of integers modulo $n$. Furthermore, the Wiener index of the total graph  of the ring of integers modulo $n$ was studied in \cite{MR3754839} and the Wiener index of the unit graph was studied in \cite{MR4414151}. The Wiener index of a compressed zero-divisor graph was also studied in \cite{MR4124061}.

This paper is structured as follows. In the next section, we gather the definitions and notations that we use throughout the paper. In Section 3, we calculate the Wiener index and the Wiener complexity of a zero-divisor graph of a simple finite ring (see Theorem \ref{mainsimple} and Theorem \ref{complexsimple}). In the final section, we use these results to study the zero-divisor graphs of semisimple finite rings. We calculate the Wiener index (see Theorem \ref{mainsemisimple}) and find an upper bound for the Wiener complexity (see Theorem \ref{complexsemisimple}).

\bigskip

 \section{Definitions and preliminaries}

\bigskip

For a finite ring $R$, we denote by $R^*$ the group of units in $R$ and by $Z(R)$ the set of zero-divisors in $R$, $Z(R)=\{x \in R;$ there exists
$0 \neq y \in R \text { such that } xy=0 \text { or } yx=0 \}$. 
We denote by $\Gamma(R)$ the (undirected) \emph{zero-divisor graph} of $R$, following the definition in \cite{MR2201052}: the vertex set $V(\Gamma(R))$ of $\Gamma(R)$ is the set of elements in $Z(R) \setminus \{0\}$ and 
an unordered pair of vertices $x,y \in V(\Gamma(R))$, $x \neq y$, is an edge 
$x-y$ in $\Gamma(R)$ if $xy = 0$ or $yx=0$.
The sequence of edges $x_0 - x_1$, $ x_1 - x_2$, ..., $x_{k-1} - x_{k}$ in a graph is called \emph{a path of length $k$}. The \DEF{distance} between vertices $x$ and $y$ is the length of the shortest
path between them, denoted by $d(x,y)$. 
The \DEF{diameter} $\diam(\Gamma)$ is the longest distance between any two vertices of the graph $\Gamma$.
The \DEF{transmission} of a vertex $u \in V(\Gamma)$ is defined by $\Tr(u)=\sum_{v \in V(\Gamma)}d(u,v)$.
The \DEF{Wiener index} of $\Gamma$ is defined as $W(\Gamma)=\frac{1}{2}\sum_{u,v \in V(\Gamma)}d(u,v)=\frac{1}{2}\sum_{u \in V(\Gamma)}\Tr(u)$.
Finally, the \DEF{Wiener complexity} of $\Gamma$ is defined as the number of all different transmissions of vertices in $V(\Gamma)$ and it is denoted by $C_W(\Gamma)$.

For a  commutative ring $R$, we denote by $M_n(R)$ the ring of all $n$-by-$n$ matrices with entries in $R$. We shall denote by $E_{ij} \in M_n(R)$ the matrix with $1$ at entry $(i,j)$ and zeros elsewhere.  For a matrix $A \in M_n(R)$, we shall denote by $A_{ij} \in R$ the $(i,j)$-th entry of $A$ and we shall denote the rank of matrix $A$ by $\rk(A)$.

For a finite ring $R$ and $x \in Z(R)$, we denote the degree of $x$ in $\Gamma(R)$ by $\degr(x)=|\{y \in R \setminus \{0,x\} ; yx= 0 \text { or } xy=0\}|$. We sometimes also denote $\degr(x)=0$ for $x \in R$ that is not a zero divisor.  Furthermore, we shall denote the annihilator of $x$ in $R$ by $\Ann(x)=\{y \in R; yx= 0 \text { or } xy=0\}$, as well as $\Annl(x)=\{y \in R; yx= 0\}$ and analogously, $\Annr(x)=\{y \in R; xy= 0\}$.

We shall limit ourselves to studying the zero-divisor graphs of finite rings due to the following Lemma.

\begin{lemma}(\cite{MR169870, MR207753})
\label{finite}
If $R$ is a ring with $m$ zero divisors, $2 \leq m < \infty$, then $R$ is a finite ring with $|R| \leq m^2$.
\end{lemma}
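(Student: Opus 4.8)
The plan is to realize the additive group of $R$ as an extension of two pieces, each of which is trapped inside $Z(R)$ and therefore has at most $m$ elements; multiplying the two bounds will give $|R| \le m^2$. Since $2 \le m$ and $0 \in Z(R)$ under the convention above, there is at least one nonzero zero-divisor, so I may choose $a, b \in R \setminus \{0\}$ with $ab = 0$. The alternative relation $ba = 0$ is handled identically by passing to the opposite ring $R^{\mathrm{op}}$, which has the same underlying set, the same cardinality, and exactly the same set of zero-divisors; so without loss of generality I work with $ab = 0$.

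First I would control the kernel piece. Consider $\Annr(b) = \{x \in R : bx = 0\}$, which is an additive subgroup of $(R,+)$. Every nonzero element of it annihilates the nonzero element $b$ on the left and is thus a zero-divisor; since $0 \in Z(R)$ as well, we get $\Annr(b) \subseteq Z(R)$ and hence $|\Annr(b)| \le m$. Next I would control the image piece $bR = \{bx : x \in R\}$: here the whole point of arranging $ab = 0$ is that $a(bx) = (ab)x = 0$ for every $x \in R$, so every nonzero element of $bR$ is annihilated on the left by the fixed nonzero element $a$ and is therefore a zero-divisor. Again $0 = b\cdot 0 \in bR$, so $bR \subseteq Z(R)$ and $|bR| \le m$.

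Finally I would assemble these via left multiplication by $b$, namely the additive homomorphism $R \to R$, $x \mapsto bx$, whose kernel is $\Annr(b)$ and whose image is $bR$. The first isomorphism theorem for abelian groups yields $R/\Annr(b) \cong bR$, and since both $bR$ and $\Annr(b)$ are finite with at most $m$ elements, $R$ itself is finite with $|R| = |bR|\cdot|\Annr(b)| \le m^2$. The one genuine subtlety, and the step I would watch most carefully, is the non-commutative bookkeeping: the subgroup used as the kernel and the set of multiples used as the image must be taken on consistent sides, so that the annihilating element really survives to kill $bR$. This is precisely what the normalization to $ab = 0$ (equivalently, the freedom to replace $R$ by $R^{\mathrm{op}}$) secures, and everything else is the routine counting just sketched.
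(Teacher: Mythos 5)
Your proof is correct. The paper itself gives no proof of this lemma---it is imported directly from Ganesan's papers \cite{MR169870, MR207753}---and your argument is exactly the classical one from those sources: pick $a,b\neq 0$ with $ab=0$ (reducing $ba=0$ to this case via the opposite ring, which has the same zero-divisor set), observe that both the kernel $\Annr(b)$ and the image $bR$ of the additive homomorphism $x\mapsto bx$ lie in $Z(R)$ and so have at most $m$ elements, and conclude $|R|=|\Annr(b)|\cdot|bR|\le m^2$ by the first isomorphism theorem. The only blemish is a harmless slip of wording: a nonzero $x$ with $bx=0$ annihilates $b$ on the \emph{right} (equivalently, is annihilated by $b$ on the left), but since the paper's $Z(R)$ is defined symmetrically this does not affect the argument.
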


\bigskip

 \section{Simple rings}

\bigskip

In this section, we shall study the Wiener index and the Wiener complexity of a simple finite ring. 
We shall need the following lemmas.

\begin{lemma}(\cite[Corollary 4.1.]{MR2510978})
\label{diam}
Let $F$ be a field and $n \geq 2$. Then $\diam(\Gamma(M_n(F)))=2$.
\end{lemma}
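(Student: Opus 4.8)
The plan is to prove the two bounds $\diam(\Gamma(M_n(F))) \ge 2$ and $\diam(\Gamma(M_n(F))) \le 2$ separately, recalling that the vertices of $\Gamma(M_n(F))$ are exactly the nonzero singular matrices, i.e. those $A$ with $1 \le \rk(A) \le n-1$.

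For the lower bound I would simply exhibit two non-adjacent vertices. Taking $A = E_{11}$ and $B = E_{11}+E_{12}$ (both of rank $1$, hence vertices since $n \ge 2$, and distinct), one computes $AB = E_{11}+E_{12} \ne 0$ and $BA = E_{11} \ne 0$, so $A$ and $B$ are not joined by an edge and $d(A,B) \ge 2$.

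For the upper bound, the key point is that every nonzero singular matrix has nontrivial right and left kernels: since $\rk(A) \le n-1$, both $\ker A$ and $\ker A^{T}$ have dimension at least $1$. Given distinct vertices $A$ and $B$, I would choose nonzero vectors $v \in \ker A$ and $w \in \ker B^{T}$ and form the rank-one matrix $C = v w^{T}$. Then $C$ is a nonzero singular matrix, hence a vertex, and $AC = (Av)w^{T} = 0$ while $CB = v(w^{T}B) = 0$. Provided $C \notin \{A,B\}$, the walk $A - C - B$ is a path of length $2$, so $d(A,B) \le 2$.

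The only subtlety, and what might at first look like the main obstacle, is the degenerate possibility that the bridge $C$ coincides with $A$ or with $B$ (which can happen when $A$ or $B$ itself has rank $1$ over a small field). This resolves itself neatly: if $C = A$ then $CB = 0$ reads $AB = 0$, and if $C = B$ then $AC = 0$ reads $AB = 0$, so in either case $A$ and $B$ are already adjacent and $d(A,B) = 1$. Hence $d(A,B) \le 2$ for every pair of distinct vertices, giving $\diam(\Gamma(M_n(F))) \le 2$, and together with the lower bound we conclude $\diam(\Gamma(M_n(F))) = 2$.
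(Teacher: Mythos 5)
Your argument is correct. Note that the paper does not prove this lemma at all --- it is quoted as Corollary 4.1 of the cited reference --- so there is no in-paper proof to compare against; what you have written is a valid self-contained justification. Both halves check out: $E_{11}$ and $E_{11}+E_{12}$ are indeed non-adjacent rank-one vertices, and the rank-one bridge $C=vw^{T}$ with $v\in\ker A$, $w^{T}B=0$ satisfies $AC=CB=0$, so it is adjacent to both endpoints under the paper's convention that $x-y$ is an edge when $xy=0$ \emph{or} $yx=0$. You were right to flag the degenerate case $C\in\{A,B\}$, and your resolution is clean: $C=A$ forces $AB=0$ via $CB=0$, and $C=B$ forces $AB=0$ via $AC=0$, so in either case $d(A,B)=1$. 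This is essentially the standard argument for this fact in the literature, and it works over an arbitrary (not necessarily finite) field, matching the hypotheses of the lemma as stated.
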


\begin{lemma}(\cite[p. 6]{MR2217227})
\label{rankk} 
Let $F$ be a finite field of cardinality $q$ and $n \geq 2$. For every $k \in \{0,1,\ldots,n\}$ there are exactly $\prod_{j=0}^{k-1}{\frac{(q^n-q^j)^2}{q^k-q^j}}$ matrices in $M_n(F)$ of rank $k$.
\end{lemma}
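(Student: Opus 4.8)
The plan is to prove this purely combinatorial statement by counting rank-$k$ matrices through their rank factorizations. The starting observation is that every matrix $A \in \Mn{n}{F}$ of rank $k$ can be written as a product $A = BC$, where $B \in \Mn{n \times k}{F}$ has full column rank $k$ and $C \in \Mn{k \times n}{F}$ has full row rank $k$: one takes the columns of $B$ to be any basis of the column space of $A$ and reads off the coordinates of the columns of $A$ in this basis to form $C$. Conversely, since $B$ is injective and $C$ surjective as linear maps, any such product has rank exactly $k$. So I would set up a count of the pairs $(B,C)$ with the full-rank property and relate it to the quantity $N_k$ we want.

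The key step is to determine how many factorizations each fixed rank-$k$ matrix $A$ admits. If $A = BC = B'C'$ are two such factorizations, then, viewing $C,C'$ as surjective maps $F^n \to F^k$ and $B,B'$ as injective maps $F^k \to F^n$, both $B$ and $B'$ have column space equal to the column space of $A$. Hence there is a unique $G \in \Gl{k}{F}$ with $B' = BG$, and injectivity of $B$ forces $C = GC'$. Thus the factorizations of $A$ are acted on freely and transitively by $\Gl{k}{F}$ via $(B,C)\mapsto(BG^{-1},GC)$, so each rank-$k$ matrix has exactly $|\Gl{k}{F}|$ factorizations, independently of $A$.

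Counting the pairs $(B,C)$ in two ways then gives
\[
N_k \cdot |\Gl{k}{F}| \;=\; \bigl(\#\{B : \rk(B)=k\}\bigr)\cdot\bigl(\#\{C : \rk(C)=k\}\bigr).
\]
It remains to evaluate each factor. The number of $n \times k$ matrices of full column rank is the number of ordered linearly independent $k$-tuples in $F^n$, namely $\prod_{j=0}^{k-1}(q^n - q^j)$; by transposition the number of $k \times n$ matrices of full row rank is the same. Likewise $|\Gl{k}{F}| = \prod_{j=0}^{k-1}(q^k - q^j)$ by the identical independent-column count in $F^k$. Substituting and cancelling yields
\[
N_k \;=\; \frac{\bigl(\prod_{j=0}^{k-1}(q^n-q^j)\bigr)^2}{\prod_{j=0}^{k-1}(q^k-q^j)} \;=\; \prod_{j=0}^{k-1}\frac{(q^n-q^j)^2}{q^k-q^j},
\]
which is the claimed formula; the boundary case $k=0$ is the empty product $1$, matching the single zero matrix.

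The only delicate point is the factorization-counting step: I must verify carefully that the $\Gl{k}{F}$-action is free and that its orbit accounts for \emph{all} factorizations of a given $A$, so that every fiber of the map $(B,C)\mapsto BC$ over the rank-$k$ matrices has size exactly $|\Gl{k}{F}|$. Once that is nailed down, everything else reduces to the standard count of linearly independent tuples over a finite field.
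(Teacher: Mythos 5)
Your argument is correct and complete. Note that the paper does not actually prove this lemma: it is imported verbatim from the cited reference, so you have supplied a self-contained proof where the paper offers none. Your route --- double counting the pairs $(B,C)$ of a rank factorization $A=BC$ and showing that the fiber over each rank-$k$ matrix is a free, transitive orbit of $\Gl{k}{F}$ acting by $(B,C)\mapsto(BG^{-1},GC)$ --- is a clean alternative to the more common derivation, which fixes the column space first: one picks a $k$-dimensional subspace $V\subseteq F^n$ in $\binom{n}{k}_q=\prod_{j=0}^{k-1}\frac{q^n-q^j}{q^k-q^j}$ ways and then counts the surjective linear maps $F^n\to V$, namely $\prod_{j=0}^{k-1}(q^n-q^j)$ of them. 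That subspace-first decomposition is essentially what the paper itself uses later, in Lemma \ref{sqzero}, where the Gaussian binomial is computed by exactly the independent-tuples-divided-by-bases count you use for $|\Gl{k}{F}|$; the two approaches are equivalent after cancelling one factor of $\prod_{j=0}^{k-1}(q^n-q^j)$ against $|\Gl{k}{F}|$ in your identity. The one step you flag as delicate --- that the $\Gl{k}{F}$-orbit exhausts all factorizations --- is handled correctly: surjectivity of $C$ forces the column space of $B$ to equal that of $A$, which pins down $B'=BG$ for a unique $G$, and left-cancelling the injective $B$ gives $C=GC'$. The $k=0$ boundary case is also correctly dispatched by the empty-product convention.
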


This immediately yields the following.

\begin{lemma}
\label{novertices}
Let $F$ be a finite field of cardinality $q$ and $n \geq 2$. Then $|V(\Gamma(M_n(F)))|=|Z(M_n(F))|-1=q^{n^2}-q^{n \choose 2}\prod_{j=1}^{n}(q^{j}-1)-1$.
\end{lemma}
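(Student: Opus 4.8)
The plan is to identify the vertex set of $\Gamma(M_n(F))$ explicitly and then count it by complementation, invoking Lemma \ref{rankk} only at the top rank. The first step is to pin down which matrices are zero-divisors. I would argue that a nonzero matrix $A \in M_n(F)$ lies in $Z(M_n(F))$ precisely when it is singular, i.e. $\rk(A) < n$. Indeed, if $A$ is invertible then $AB = 0$ or $BA = 0$ forces $B = 0$, so $A$ is not a zero-divisor; conversely, if $\rk(A) < n$ then $A$ has a nontrivial right kernel, and choosing any nonzero matrix $B$ whose columns lie in $\ker A$ gives $AB = 0$, exhibiting $A$ as a (right) zero-divisor. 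Hence $Z(M_n(F))$ is exactly the set of singular matrices, and deleting the zero matrix yields $V(\Gamma(M_n(F)))$.

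Next I would count by complementation rather than by summing the rank counts. The total number of matrices is $q^{n^2}$, and the non-zero-divisors are precisely the invertible matrices, which are exactly the matrices of rank $n$. Applying Lemma \ref{rankk} with $k = n$ gives
\[
|\Gl{n}{F}| = \prod_{j=0}^{n-1}\frac{(q^n - q^j)^2}{q^n - q^j} = \prod_{j=0}^{n-1}(q^n - q^j),
\]
so that $|Z(M_n(F))| = q^{n^2} - \prod_{j=0}^{n-1}(q^n - q^j)$.

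Finally I would simplify this product into the stated closed form and account for the removed zero vertex. Factoring $q^j$ out of the $j$-th factor gives $q^n - q^j = q^j(q^{n-j} - 1)$, whence
\[
\prod_{j=0}^{n-1}(q^n - q^j) = \Bigl(\prod_{j=0}^{n-1} q^j\Bigr)\prod_{j=0}^{n-1}(q^{n-j} - 1) = q^{\binom{n}{2}}\prod_{i=1}^{n}(q^i - 1),
\]
using $\sum_{j=0}^{n-1} j = \binom{n}{2}$ and the re-indexing $i = n - j$. Subtracting this from $q^{n^2}$ gives $|Z(M_n(F))|$, and subtracting a further $1$ for the excluded zero matrix yields the claimed value of $|V(\Gamma(M_n(F)))|$. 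The only point requiring any care is the zero-divisor characterization in the first step (checking that both one-sided annihilators are accounted for), since the two remaining steps are a direct application of Lemma \ref{rankk} together with routine index bookkeeping; I therefore expect no genuine obstacle, consistent with the remark that the result follows immediately.
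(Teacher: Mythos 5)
Your proposal is correct and follows essentially the same route as the paper: both count the invertible matrices via the $k=n$ case of Lemma \ref{rankk}, simplify the product to $q^{\binom{n}{2}}\prod_{j=1}^{n}(q^j-1)$, and subtract from $q^{n^2}$ together with the zero matrix. The only difference is that you spell out the (standard) fact that the nonzero zero-divisors of $M_n(F)$ are exactly the nonzero singular matrices, which the paper takes for granted.
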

\begin{proof}
 By Lemma \ref{rankk}, there are exactly $\prod_{j=0}^{n-1}{(q^n-q^j)}=q^{n \choose 2}\prod_{j=0}^{n-1}(q^{n-j}-1)=q^{n \choose 2}\prod_{j=1}^{n}(q^{j}-1)$ invertible matrices in $M_n(F)$. Since all nonzero noninvertible matrices are in $V(\Gamma(M_n(F)))$, we immediately get the desired equation,
$|V(\Gamma(M_n(F)))|=q^{n^2}-q^{n \choose 2}\prod_{j=1}^{n}(q^{j}-1)-1$.
\end{proof}

The following lemma can be deduced from \cite{MR2201052}.

\begin{lemma}
\label{sizeofann}
Let $F$ be a finite field of cardinality $q$  and $n \geq 2$. Suppose $A \in M_n(F)$ is a matrix of rank $k$ for some $k \in \{1,2,\ldots,n\}$. Then $|\Ann(A)|=2q^{n(n-k)}-q^{(n-k)^2}$ and $\degr(A)=2q^{n(n-k)}-q^{(n-k)^2} - \epsilon$, where $\epsilon=1$ if $A^2 \neq 0$ and $\epsilon=2$ otherwise.
\end{lemma}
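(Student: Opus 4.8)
The plan is to write $\Ann(A)=\Annl(A)\cup\Annr(A)$ and compute its cardinality by inclusion--exclusion, after which the formula for $\degr(A)$ follows by removing the two potentially excluded vertices $0$ and $A$.

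First I would compute $|\Annr(A)|$. A matrix $y$ satisfies $Ay=0$ precisely when every column of $y$ lies in the right null space $\ker A=\{v\in F^n : Av=0\}$. Since $\rk(A)=k$, the rank--nullity theorem gives $\dim\ker A=n-k$, so each of the $n$ columns can be chosen independently among $q^{n-k}$ vectors, yielding $|\Annr(A)|=q^{n(n-k)}$. Symmetrically, $yA=0$ holds exactly when each of the $n$ rows of $y$ lies in the left null space of $A$, which also has dimension $n-k$ because $\rk(A^{T})=\rk(A)$, so $|\Annl(A)|=q^{n(n-k)}$ as well.

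The crux is counting the intersection $\Annl(A)\cap\Annr(A)=\{y : Ay=0 \text{ and } yA=0\}$. I would reformulate the two conditions operator-theoretically: $Ay=0$ says $\operatorname{im}(y)\subseteq\ker A$, while $yA=0$ says $\operatorname{im}(A)\subseteq\ker y$. A linear map $y$ satisfying both therefore factors as $F^n\twoheadrightarrow F^n/\operatorname{im}(A)\xrightarrow{\ \bar y\ }\ker A\hookrightarrow F^n$, and conversely every $\bar y\in\operatorname{Hom}(F^n/\operatorname{im}(A),\ker A)$ lifts to exactly one such $y$; this correspondence is a bijection. Since both $F^n/\operatorname{im}(A)$ and $\ker A$ have dimension $n-k$, the space of such $\bar y$ has dimension $(n-k)^2$, so $|\Annl(A)\cap\Annr(A)|=q^{(n-k)^2}$, and inclusion--exclusion gives $|\Ann(A)|=2q^{n(n-k)}-q^{(n-k)^2}$. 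I expect this intersection count to be the main obstacle: one is tempted to reduce $A$ to its canonical rank form $\bigl(\begin{smallmatrix} I_k & 0 \\ 0 & 0\end{smallmatrix}\bigr)$, but the equivalence $A=PBQ$ transforms $\Annl$ and $\Annr$ on opposite sides, so no single substitution simultaneously normalizes both conditions; the invariant factorization argument sidesteps this entirely.

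Finally, for $\degr(A)$ I would observe that $\degr(A)=|\Ann(A)\setminus\{0,A\}|$. The zero matrix always lies in $\Ann(A)$, so it is always removed. The matrix $A$ itself lies in $\Ann(A)$ if and only if $A^2=0$, and in that case $A\neq 0$ (as $k\geq 1$) makes it a second distinct removed element. Hence we subtract $\epsilon=2$ when $A^2=0$ and $\epsilon=1$ otherwise, giving $\degr(A)=2q^{n(n-k)}-q^{(n-k)^2}-\epsilon$.
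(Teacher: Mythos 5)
Your proof is correct, but it takes a different route from the paper: the paper disposes of this lemma with a one-line citation to Lemma 14 of the reference \cite{MR2201052}, whereas you give a self-contained argument. Your strategy --- compute $|\Annl(A)|=|\Annr(A)|=q^{n(n-k)}$ and $|\Annl(A)\cap\Annr(A)|=q^{(n-k)^2}$ and combine by inclusion--exclusion, then subtract $0$ and (when $A^2=0$) $A$ itself to pass from $|\Ann(A)|$ to $\degr(A)$ --- is in fact the same technique the paper deploys later for the semisimple case (Lemma \ref{annleftright} together with the inclusion--exclusion step in Lemma \ref{annsizes}), so your writeup has the merit of making Lemma \ref{sizeofann} independent of the external reference and of unifying it with the later material. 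Your counts are all correct: columns of $y$ ranging over $\ker A$ give $|\Annr(A)|$, rows over the left null space give $|\Annl(A)|$, and the bijection between two-sided annihilating matrices and $\mathrm{Hom}(F^n/\operatorname{im}(A),\ker A)$ gives the intersection; the bookkeeping with $\epsilon$ is also right, including the observation that $k\geq 1$ guarantees $A\neq 0$ so that the two removed vertices are distinct when $A^2=0$. One small inaccuracy in a side remark: you claim that no single substitution can normalize the two conditions $yA=0$ and $Ay=0$ simultaneously after writing $PAQ$ in rank canonical form, but the paper's Lemma \ref{annleftright} does exactly this with the bijection $C\mapsto Q^{-1}CP^{-1}$, since $(Q^{-1}CP^{-1})(PAQ)=Q^{-1}(CA)Q$ and $(PAQ)(Q^{-1}CP^{-1})=P(AC)P^{-1}$. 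This does not affect your proof --- your coordinate-free factorization argument is valid and arguably cleaner --- but the canonical-form route is not actually blocked.
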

\begin{proof}
The statement follows directly from \cite[Lemma 14]{MR2201052}.
\end{proof}

Next, we shall need to find the number of square zero matrices of a specified rank.

\begin{lemma}
\label{sqzero}
Let $F$ be a finite field of cardinality $q$ and $n \geq 2$. Then $|\{0 \neq A \in M_n(F); A \text { is a matrix } \text {of rank } k \text { with } A^2=0\}|=\frac{\prod_{j=0}^{2k-1} \left( q^{n}- q^{j}\right )}{q^{k^2}\prod_{j=0}^{k-1}\left( q^{k}-q^{j} \right )}$.
\end{lemma}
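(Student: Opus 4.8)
The plan is to reduce the count to two standard enumerations — the number of $k$-dimensional subspaces of $F^n$ and the number of surjective linear maps between spaces of prescribed dimensions — by encoding each square-zero matrix of rank $k$ as a pair of such data. Concretely, I would regard a matrix $A \in M_n(F)$ of rank $k$ as a linear endomorphism of $F^n$ and factor it as $A = \iota_W \circ \phi$, where $W = \operatorname{im}(A)$ is a $k$-dimensional subspace, $\iota_W \colon W \hookrightarrow F^n$ is the inclusion, and $\phi \colon F^n \twoheadrightarrow W$ is the induced surjection. Sending $A$ to the pair $(W,\phi)$ is a bijection from rank-$k$ matrices onto pairs consisting of a $k$-dimensional subspace $W$ together with a surjective linear map $\phi \colon F^n \to W$; this is precisely the bookkeeping underlying Lemma~\ref{rankk}.

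The key step is to translate the condition $A^2 = 0$ through this correspondence. Since $\iota_W$ is injective and $\phi$ is surjective, one checks that $A^2 = \iota_W \phi \iota_W \phi = 0$ holds if and only if $\phi$ vanishes on $W$, that is, $W \subseteq \ker \phi$. Thus the admissible $\phi$ are exactly those factoring through the quotient $F^n / W$, and they correspond bijectively to surjective linear maps $\overline{\phi} \colon F^n/W \to W$ from an $(n-k)$-dimensional space onto a $k$-dimensional one. In particular, when $2k > n$ no such surjection exists, so the count is $0$; this matches the claimed formula, whose numerator then contains the vanishing factor $q^n - q^n$.

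It remains to multiply the two counts and simplify. The number of $k$-dimensional subspaces $W$ is the Gaussian binomial $\binom{n}{k}_q = \prod_{j=0}^{k-1}(q^n - q^j)\big/\prod_{j=0}^{k-1}(q^k - q^j)$, and the number of surjective linear maps from an $(n-k)$-dimensional space onto a $k$-dimensional one is $\prod_{j=0}^{k-1}(q^{n-k} - q^j)$ (successively choosing the images of a basis to be linearly independent). Their product is the desired cardinality. The final simplification is purely algebraic: after the substitution $j \mapsto k+j$ one factors $q^k$ out of each term to obtain $q^{k^2}\prod_{j=0}^{k-1}(q^{n-k} - q^j) = \prod_{j=k}^{2k-1}(q^n - q^j)$, which merges the two products over $q^n - q^j$ into $\prod_{j=0}^{2k-1}(q^n - q^j)$ and yields the stated closed form $\prod_{j=0}^{2k-1}(q^n - q^j)\big/\big(q^{k^2}\prod_{j=0}^{k-1}(q^k - q^j)\big)$.

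I expect the main obstacle to be the middle step — verifying cleanly that $A^2 = 0$ is equivalent to $W \subseteq \ker\phi$ and that the surviving maps are counted by surjections out of $F^n/W$ — rather than the closing algebra, which is routine once the two standard counts are in hand.
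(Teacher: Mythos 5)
Your argument is correct, and it takes a more self-contained route than the paper at the one point where the two proofs genuinely differ. The paper obtains the key intermediate count $\binom{n}{k}_q\prod_{j=1}^{k}(q^{n-k}-q^{j-1})$ by citing a remark from an external reference on enumerating matrices with prescribed dimensions of $A(F^n)$ and $A^2(F^n)$, and then spends its effort computing the Gaussian binomial and carrying out the same closing algebra you do. You instead derive that count directly: the factorization $A=\iota_W\circ\phi$ with $W=\operatorname{im}(A)$, the observation that $A^2=0$ is equivalent to $W\subseteq\ker\phi$ (that is, $\operatorname{im}(A)\subseteq\ker(A)$), and the resulting bijection with pairs consisting of a $k$-dimensional subspace and a surjection out of $F^n/W$ replace the citation with a short structural argument; your remark that the closed form self-reports the vanishing for $2k>n$ via the factor $q^n-q^n$ is also a useful sanity check the paper does not make. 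One small repair: your parenthetical justification for the count $\prod_{j=0}^{k-1}(q^{n-k}-q^{j})$ of surjections $F^{n-k}\twoheadrightarrow F^{k}$ (``successively choosing the images of a basis to be linearly independent'') actually describes the count of injections $F^{k}\hookrightarrow F^{n-k}$; the number is nevertheless correct, either by dualizing (surjections $V\to W$ correspond to injections $W^{*}\to V^{*}$) or by counting a surjection as a choice of $(n-2k)$-dimensional kernel together with an isomorphism $\bigl(F^{n-k}/\ker\bigr)\to F^{k}$, which gives $\binom{n-k}{k}_q\prod_{j=0}^{k-1}(q^{k}-q^{j})=\prod_{j=0}^{k-1}(q^{n-k}-q^{j})$.
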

\begin{proof}
If $A^2=0$ for some $A \in M_n(F)$ of rank $k$ then $\dim_F(A(F^n))=k$ and $\dim_F(A^2(F^n))=0$. We can now deduce from Remark 3.2 in \cite{MR2190192} that $|\{0 \neq A \in M_n(F); A \text { is a matrix of rank } k \text { with } A^2=0\}|={n \choose k}_q \prod_{j=1}^k{\left( q^{n-k}-q^{j-1} \right )}$, where${n \choose k}_q$ denotes the number of $k$-dimensional subspaces in $F^n$. 
To find the number of subspaces of dimension $k$, we have to count the number of ways one can choose $k$ independent vectors in $F^n$.
The first vector can be chosen in $q^n-1$ ways, the second vector can be chosen in $q^n-q$ ways, and so on.
However, some of these pickings generate the same vector space, so to arrive at the number of distinct vector spaces, we have to divide this by the number of different bases for a $k$ dimensional vector space, which we proceed to calculate by the same procedure as above. Therefore, 
 $|\{0 \neq A \in M_n(F); A \text { is a matrix of rank } k \text { with } A^2=0\}|=\frac{\left( q^{n}-1 \right )\left( q^{n}-q \right )\ldots \left( q^{n}-q^{k-1} \right )}{\left( q^{k}-1 \right )\left( q^{k}-q \right )\ldots \left( q^{k}-q^{k-1} \right )} \prod_{j=1}^k{\left( q^{n-k}-q^{j-1} \right )}=\frac{\prod_{j=0}^{2k-1} \left( q^{n}- q^{j}\right )}{q^{k^2}\prod_{j=0}^{k-1}\left( q^{k}-q^{j} \right )}$.
\end{proof}

Now, we have everything ready to prove the first theorem of this section.

\begin{theorem}
\label{mainsimple}
Let $F$ be a finite field of cardinality $q$ and $n \geq 2$. Then 
\begin{multline*}
W(\Gamma(M_n(F)))= \\
|Z(M_n(F))|^2 - \frac{5}{2}|Z(M_n(F))|+\frac{3}{2} + \\ 
\frac{1}{2} \sum_{k=1}^{n-1}\left(\prod_{j=0}^{k-1}{\frac{q^n-q^j}{q^k-q^j}}\left(\prod_{j=0}^{k-1}\left( q^{n-k}-q^j \right )-(2q^{n(n-k)}-q^{(n-k)^2})\prod_{j=0}^{k-1}(q^n-q^j)\right)\right), 
\end{multline*}
where $|Z(M_n(F))|=q^{n^2}-q^{n \choose 2}\prod_{j=1}^{n}(q^{j}-1)$.
\end{theorem}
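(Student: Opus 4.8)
The plan is to exploit the fact that, by Lemma \ref{diam}, the graph $\Gamma(M_n(F))$ has diameter $2$, so that every pair of distinct vertices is at distance either $1$ or $2$. Writing $N=|V(\Gamma(M_n(F)))|=|Z(M_n(F))|-1$ for the number of vertices (Lemma \ref{novertices}), a fixed vertex $u$ has exactly $\degr(u)$ neighbours at distance $1$, while the remaining $N-1-\degr(u)$ vertices lie at distance exactly $2$. Hence $\Tr(u)=\degr(u)+2\bigl(N-1-\degr(u)\bigr)=2(N-1)-\degr(u)$, and summing over all vertices gives
\[
W(\Gamma(M_n(F)))=\frac12\sum_{u}\Tr(u)=N(N-1)-\frac12\sum_{u}\degr(u).
\]
Thus the entire computation reduces to evaluating the degree sum $\sum_u\degr(u)$.

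To compute $\sum_u\degr(u)$ I would partition the vertices according to rank: the vertices of $\Gamma(M_n(F))$ are precisely the nonzero non-invertible matrices, namely those of rank $k$ with $1\le k\le n-1$. By Lemma \ref{rankk} there are $r_k:=\prod_{j=0}^{k-1}\frac{(q^n-q^j)^2}{q^k-q^j}$ matrices of rank $k$, and by Lemma \ref{sizeofann} each such $A$ satisfies $\degr(A)=\bigl(2q^{n(n-k)}-q^{(n-k)^2}\bigr)-\epsilon_A$, where $\epsilon_A=2$ if $A^2=0$ and $\epsilon_A=1$ otherwise. Since $|\Ann(A)|$ depends only on $k$, the leading part sums immediately over each rank class; for the correction $\sum_A\epsilon_A$ I split it as $r_k$ (the ``$1$'' contributed by every matrix) plus $s_k$, where $s_k$ counts the rank-$k$ matrices with $A^2=0$, supplied by Lemma \ref{sqzero}. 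Using $\sum_{k=1}^{n-1}r_k=N$, this yields
\[
\sum_u\degr(u)=\sum_{k=1}^{n-1}r_k\bigl(2q^{n(n-k)}-q^{(n-k)^2}\bigr)-N-\sum_{k=1}^{n-1}s_k .
\]

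Substituting into the expression for $W$ and using $N=|Z(M_n(F))|-1$, the purely polynomial part collects to $N(N-1)+\tfrac12 N=|Z(M_n(F))|^2-\tfrac52|Z(M_n(F))|+\tfrac32$, which is exactly the first line of the claimed formula. It then remains to reconcile the two surviving sums, the term $\tfrac12\sum_k s_k$ from the square-zero count and the term $-\tfrac12\sum_k r_k\bigl(2q^{n(n-k)}-q^{(n-k)^2}\bigr)$ from the annihilator sizes, with the single sum written in the theorem. Factoring out the Gaussian binomial $c_k:=\prod_{j=0}^{k-1}\frac{q^n-q^j}{q^k-q^j}$, one has $r_k=c_k\prod_{j=0}^{k-1}(q^n-q^j)$, which handles the second sum directly; for the first I would invoke the intermediate identity $s_k=c_k\prod_{j=0}^{k-1}(q^{n-k}-q^j)$ established inside the proof of Lemma \ref{sqzero}, turning $\tfrac12\sum_k s_k$ into precisely the $\prod_{j=0}^{k-1}(q^{n-k}-q^j)$ summand of the theorem. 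Collecting both under a common $\tfrac12\sum_{k=1}^{n-1}c_k(\dots)$ produces the displayed formula.

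The only genuinely delicate point is this final bookkeeping: one must extract the factor $\prod_{j=0}^{k-1}(q^n-q^j)$ consistently from $r_k$ and rewrite $s_k$ through its Gaussian-binomial form rather than its closed product $\frac{\prod_{j=0}^{2k-1}(q^n-q^j)}{q^{k^2}\prod_{j=0}^{k-1}(q^k-q^j)}$, the matching of these two shapes being exactly the identity proved in Lemma \ref{sqzero}. No analytic or structural difficulty intervenes, since the diameter-$2$ property linearises $\Tr(u)$ in $\degr(u)$; the work is entirely the algebraic consolidation of the three rank-indexed sums into the compact form stated.
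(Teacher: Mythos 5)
Your proposal is correct and follows essentially the same route as the paper: the diameter-$2$ property reduces $W$ to $N(N-1)-\tfrac12\sum_u\degr(u)$ (the paper phrases this via the counts $D_1,D_2$ of ordered pairs at distance $1$ and $2$, which is the same computation), and the degree sum is then evaluated by stratifying by rank using Lemmas \ref{rankk}, \ref{sizeofann} and \ref{sqzero}, with the square-zero correction handled exactly as you describe. Your final bookkeeping, including the identification $s_k=c_k\prod_{j=0}^{k-1}(q^{n-k}-q^j)$ from the Gaussian-binomial form in Lemma \ref{sqzero}, matches the paper's consolidation of the sums.
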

\begin{proof}
For $i=1,2$ denote $D_i=|\{ (a,b); a,b \in V(\Gamma(M_n(F))) \text { such that } d(a,b)=i\}|$.
 We know by Lemma \ref{diam} that for every pair of (distinct) vertices $A, B \in V(\Gamma(M_n(F)))$, we either have $d(A,B)=1$ or $d(A,B)=2$, so $D_1+D_2=|V(\Gamma(M_n(F)))|(|V(\Gamma(M_n(F)))|-1)=(|Z(M_n(F))|-1)(|Z(M_n(F))|-2)$.
 Therefore $W(\Gamma(M_n(F)))=\frac{1}{2}(D_1+2D_2)=\frac{1}{2}(D_1+2((|Z(M_n(F))|-1)(|Z(M_n(F))|-2)-D_1))=
 (|Z(M_n(F))|-1)(|Z(M_n(F))|-2)-\frac{1}{2}D_1$.
 By using Lemmas \ref{rankk}, \ref{novertices}, \ref{sizeofann} and \ref{sqzero}, we arrive at  
\begin{multline*}
W(\Gamma(M_n(F)))=(|Z(M_n(F))|-1)(|Z(M_n(F))|-2)- \\ 
\frac{1}{2} \sum_{k=1}^{n-1}\prod_{j=0}^{k-1}{\frac{(q^n-q^j)^2}{q^k-q^j}}(2q^{n(n-k)}-q^{(n-k)^2})+\\
\frac{1}{2} (|Z(M_n(F))|-1)+\frac{1}{2}\sum_{k=1}^{n-1}\frac{\prod_{j=0}^{2k-1} \left( q^{n}- q^{j}\right )}{q^{k^2}\prod_{j=0}^{k-1}\left( q^{k}-q^{j} \right )} = \\
|Z(M_n(F))|^2 - \frac{5}{2}|Z(M_n(F))|+\frac{3}{2} + \\ 
\frac{1}{2} \sum_{k=1}^{n-1}\left(\prod_{j=0}^{k-1}{\frac{q^n-q^j}{q^k-q^j}}\left(\prod_{j=0}^{k-1}\left( q^{n-k}-q^j \right )-(2q^{n(n-k)}-q^{(n-k)^2})\prod_{j=0}^{k-1}(q^n-q^j)\right)\right)=\\
|Z(M_n(F))|^2 - \frac{5}{2}|Z(M_n(F))|+\frac{3}{2} + \\ 
\frac{1}{2} \sum_{k=1}^{n-1}\left(\prod_{j=0}^{k-1}{\frac{q^n-q^j}{q^k-q^j}}\left(\prod_{j=0}^{k-1}\left( q^{n-k}-q^j \right )-(2q^{n(n-k)}-q^{(n-k)^2})\prod_{j=0}^{k-1}(q^n-q^j)\right)\right).
\end{multline*}
Finally, by Lemma \ref{novertices}, we have $|Z(M_n(F))|=q^{n^2}-q^{n \choose 2}\prod_{j=1}^{n}(q^{j}-1)$.
\end{proof}

Not unexpectedly, the formula for the Wiener index is quite a convoluted one. To illustrate Theorem \ref {mainsimple}, let us examine the simpler case $n=2$ separately, where the formula is of course significantly more simple.

\begin{example}
Direct application of Theorem \ref{mainsimple} and some calculations that we shall omit here, immediately yield that
$W(\Gamma(M_2(F)))=|F|^6+|F|^5-\frac{3}{2}|F|^4-3|F|^3-\frac{1}{2}|F|^2+2|F|+1$. In particular for example, $W(\Gamma(M_2(\ZZ_2)))=51$.
\end{example}

As a final result in this section, we can completely determine the Wiener complexity of a zero divisor graph of a simple finite ring. It turns out that the Wiener complexity is dependent only on the size of matrices and not on the size of the underlying field. Specifically, we get the following theorem.

\begin{theorem}
\label{complexsimple}
Let $F$ be a finite field and $n \geq 2$. Then 
$$C_W(\Gamma(M_n(F)))=2(n-1).$$
\end{theorem}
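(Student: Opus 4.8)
The plan is to reduce the computation of $C_W(\Gamma(M_n(F)))$ to counting distinct vertex degrees. By Lemma \ref{diam} the graph has diameter $2$, so for any vertex $A$ every other vertex lies at distance $1$ or $2$; writing $N=|V(\Gamma(M_n(F)))|$, exactly $\degr(A)$ vertices are at distance $1$ and the remaining $N-1-\degr(A)$ at distance $2$. Hence $\Tr(A)=\degr(A)+2\bigl(N-1-\degr(A)\bigr)=2(N-1)-\degr(A)$. Since this is a strictly decreasing affine function of the degree, two vertices have equal transmission if and only if they have equal degree, so $C_W(\Gamma(M_n(F)))$ equals the number of distinct values of $\degr$. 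This already explains why the answer ought to be independent of $q$: all dependence on the field size is absorbed into the common term $2(N-1)$.

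Next I would read off the possible degrees from Lemma \ref{sizeofann}. A vertex $A$ has rank $k\in\{1,\dots,n-1\}$ (rank $0$ is excluded as $A\neq 0$, and rank $n$ gives a unit), and $\degr(A)=f(k)-\epsilon$, where $f(k)=2q^{n(n-k)}-q^{(n-k)^2}$ and $\epsilon\in\{1,2\}$ according to whether $A^2\neq 0$ or $A^2=0$. Thus the degree is controlled by only two pieces of data, the rank and the vanishing of $A^2$, giving at most the $2(n-1)$ candidate values $f(1)-1,\,f(1)-2,\,\dots,\,f(n-1)-1,\,f(n-1)-2$. The theorem amounts to the assertion that all $2(n-1)$ of these candidates are both attained by some vertex and pairwise distinct.

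For the distinctness I would show that $f$ is strictly monotone in $k$ with consecutive values differing by more than $2$: setting $m=n-k$, one compares $f$ at adjacent arguments and estimates the dominant term $2q^{nm}$ against $q^{m^2}$, so that the two-element clusters $\{f(k)-1,f(k)-2\}$ neither overlap one another nor collapse internally. This monotonicity-with-gap estimate I expect to be routine. For the realization, the value $\epsilon=1$ is easy at every rank: the idempotent $E_{11}+\dots+E_{kk}$ has rank $k$ and squares to itself, supplying a vertex with $A^2\neq 0$ for each $k$.

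The step I expect to be the genuine obstacle is the realization of the value $\epsilon=2$ at \emph{every} rank $k\le n-1$. A matrix with $A^2=0$ forces $\mathrm{im}(A)\subseteq\ker(A)$ and hence $k\le n-k$, so square-zero matrices of rank $k$ exist only when $2k\le n$; this is exactly what the count in Lemma \ref{sqzero} detects, since its numerator $\prod_{j=0}^{2k-1}(q^n-q^j)$ vanishes once $n\le 2k-1$. I would therefore scrutinize this realization question most carefully, because the final tally of distinct degrees hinges precisely on for which ranks both values $f(k)-1$ and $f(k)-2$ actually occur, and I would want to reconcile the geometric constraint $2k\le n$ with the clean closed form $2(n-1)$ before committing to the argument.
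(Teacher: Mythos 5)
Your reduction of the Wiener complexity to the number of distinct vertex degrees, via $\Tr(A)=2(N-1)-\degr(A)$ on a diameter-two graph, is exactly the paper's first step, and your distinctness argument (the two-element clusters $\{f(k)-1,f(k)-2\}$ for different ranks $k$ can neither overlap nor collapse) is a minor variant of the paper's, which instead observes that the difference $f(k)-f(l)=\epsilon-\varphi\in\{-1,0,1\}$ is divisible by $q$, hence zero, and then invokes the strict monotonicity of $k\mapsto 2q^{n(n-k)}-q^{(n-k)^2}$. Up to that point everything is sound and agrees with the paper.

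The step you flagged as the genuine obstacle is indeed where the matter is decided, and your geometric observation settles it: $A^2=0$ forces $\mathrm{im}(A)\subseteq\ker(A)$, so square-zero matrices of rank $k$ exist in $M_n(F)$ only when $2k\le n$, exactly as the vanishing of the numerator $\prod_{j=0}^{2k-1}(q^n-q^j)$ in Lemma \ref{sqzero} for $2k>n$ confirms. The paper's proof glosses over this: it asserts that ``for any $k$ from $1$ to $n-1$, we can find a matrix in $M_n(F)$ that is square-zero as well as one that is not,'' which is false for $k>n/2$. Consequently the realized pairs $(k,\epsilon)$ are $(k,1)$ for all $k\in\{1,\dots,n-1\}$ but $(k,2)$ only for $k\le\lfloor n/2\rfloor$, and since all realized pairs give distinct degrees, the count is $C_W(\Gamma(M_n(F)))=(n-1)+\lfloor n/2\rfloor$, which equals $2(n-1)$ only for $n=2$. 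Concretely, in $M_3(\ZZ_2)$ the only occurring degrees are $111$ (rank $1$, $A^2\neq 0$), $110$ (rank $1$, $A^2=0$) and $13$ (rank $2$, necessarily $A^2\neq 0$), so $C_W=3$ rather than $4$. Your refusal to commit before reconciling the constraint $2k\le n$ with the closed form $2(n-1)$ was the right instinct: the reconciliation fails, the theorem as stated is incorrect for $n\ge 3$, and the corrected value is $(n-1)+\lfloor n/2\rfloor$.
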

\begin{proof}
Choose $0 \neq A \in Z(M_n(F))$. Then by Lemma \ref{diam}, we have $\Tr(A)=\degr(A)+2(|V(\Gamma(M_n(F)))|-1 - \degr(A))=2(|Z(M_n(F))|-2)-\degr(A)$. This implies that for $0 \neq A, B \in Z(M_n(F))$ we have $\Tr(A)=\Tr(B)$ if and only if $\degr(A)=\degr(B)$. 

Obviously, Lemma \ref{sizeofann} implies that if $\rk(A)=\rk(B)$ then $\degr(A)=\degr(B)$ if and only if either $A^2=B^2=0$ or $A^2, B^2 \neq 0$.

On the other hand, suppose that $\rk(A) \neq \rk(B)$. Denote $k=\rk(A)$, $l=\rk(B)$ and $q=|F|$ and observe that by Lemma \ref{sizeofann}, $\degr(A)=2q^{n(n-k)}-q^{(n-k)^2}-\epsilon$ and $\degr(B)=2q^{n(n-l)}- q^{(n-l)^2}-\varphi$ for some $\epsilon, \varphi \in \{1,2\}$. Therefore $2q^{n(n-k)}-2q^{n(n-l)}+ q^{(n-k)^2}-q^{(n-l)^2}=\epsilon-\varphi \in \{-1,0,1\}$. Since $1 \leq k,l \leq n-1$, the left hand of the equation is divisible by $q$, therefore $\epsilon - \varphi = 0$. Since $k \mapsto 2q^{n(n-k)}-q^{(n-k)^2}$ is a strictly decreasing function, we can conclude that $\degr(A) \neq \degr(B)$.

Since for any $k$ from $1$ to $n-1$, we can find a matrix in $M_n(F)$ that is square-zero as well as one that is not, we see that there are exactly $2(n-1)$ different transmissions in the zero-divisor graph. Thus, we have proved that $C_W(\Gamma(M_n(F)))=2(n-1)$.
\end{proof}

\bigskip

 \section{Semisimple rings}

\bigskip

In this section, we examine the Wiener index and the Wiener complexity of a zero divisor graph of a semisimple finite ring. Of course, the Wedderburn-Artin theorem implies that every finite semisimple ring is isomorphic to a direct product of matrix rings over some fields. 
Let us firstly prove the following lemma. 

\begin{lemma}
\label{zerodivisorsproduct}
Let $l \in \NN$ and $R=R_1 \times R_2 \times \ldots \times R_l$ be a direct product of finite rings. Then $Z(R)=R \setminus R_1^* \times R_2^* \times \ldots \times R_l^*$ and $\diam(\Gamma(R)) \leq 3$. Furthermore,
for any $a=(a_1,a_2,\ldots,a_l), b=(b_1,b_2,\ldots,b_l) \in \Gamma(R)$ we have $d(a,b) = 3$ if and only if for every $i \in \{1,2,\ldots,l\}$ we have either $a_i \in R_i^*$ or $b_i \in R_i^*$ and there exists $j \in \{1,2,\ldots,l\}$ such that $a_j \neq 0$ and $b_j \neq 0$.
\end{lemma}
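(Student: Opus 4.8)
The plan is to prove the three assertions in turn, the first being essentially formal and the last two resting on a coordinatewise analysis of adjacency. I would first settle $Z(R)=R\setminus(R_1^*\times\cdots\times R_l^*)$. An element $x=(x_1,\dots,x_l)$ is invertible in $R$ exactly when each $x_i$ is invertible in $R_i$, so $R_1^*\times\cdots\times R_l^*$ is precisely the unit group of $R$; it then suffices to recall the standard fact that in a finite ring every non-unit is a one-sided zero-divisor. Indeed, if $x$ is not a left zero-divisor then left multiplication by $x$ is injective, hence bijective by finiteness, so $x$ has a right inverse; symmetrically a non-right-zero-divisor has a left inverse; and an element with both a left and a right inverse is a unit. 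Since a unit annihilates nothing nonzero, both inclusions follow.

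For the distance statements I would record that $a$ and $b$ are adjacent iff $a_ib_i=0$ for all $i$ or $b_ia_i=0$ for all $i$. The equivalence for $d(a,b)=3$ I would establish by proving the two easy inclusions directly and the remaining one by contraposition. For the hard direction of the forward implication, assume every coordinate carries a unit among $a_i,b_i$ and that $a_j,b_j\ne 0$ for some $j$. At that $j$ one of $a_j,b_j$ is a unit, say $a_j$; then $a_jb_j\ne 0$ and $b_ja_j\ne 0$, so $ab\ne 0\ne ba$ and $a\not\sim b$. Moreover any common neighbour $c$ must vanish: for each $i$ pick whichever of $a_i,b_i$ is a unit and use whichever of $ac=0,ca=0$ (resp. $bc=0,cb=0$) holds; that unit forces $c_i=0$, whence $c=0$, a contradiction. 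Thus $d(a,b)\ge 3$. Conversely, if some coordinate $i_0$ carries two non-units I will exhibit a common neighbour, giving $d(a,b)\le 2$, while if the supports are disjoint then $ab=0$ coordinatewise and $d(a,b)\le 1$.

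It remains to bound all distances above by $3$, which also closes the forward direction. Put $U_a=\{i:a_i\notin R_i^*\}$ and $U_b=\{i:b_i\notin R_i^*\}$; both are nonempty because $a,b$ are non-units. If $U_a\cap U_b=\emptyset$, choose $p\in U_a$ and $p'\in U_b$ with $p\ne p'$. Since $a_p$ is a non-unit of the finite ring $R_p$ it has a nonzero one-sided annihilator; placing that annihilator in coordinate $p$ and zero elsewhere gives a vertex $c$ with an edge $a-c$, and symmetrically an edge $b-d$ with $d$ supported on $p'$. As $c$ and $d$ have disjoint supports, $cd=0$ and $c-d$ is an edge, so $a-c-d-b$ is a walk of length $3$ and $d(a,b)\le 3$. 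Under the hypotheses of the forward direction every coordinate carries a unit, so $U_a\cap U_b=\emptyset$ and this construction applies, giving $d(a,b)=3$.

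The one genuinely delicate point, and the main obstacle, is the construction of a common neighbour when a coordinate $i_0$ carries two \emph{nonzero} non-units $a_{i_0},b_{i_0}$. I would look for $c$ supported on $i_0$, i.e. a nonzero $z\in R_{i_0}$ with ($a_{i_0}z=0$ or $za_{i_0}=0$) and ($b_{i_0}z=0$ or $zb_{i_0}=0$). Such a $z$ need not exist in an arbitrary finite ring, so here I would use that in the semisimple setting each factor is a matrix ring $M_n(F)$: taking $0\ne v\in\ker a_{i_0}$ and $0\ne w$ with $w^{\top}b_{i_0}=0$ (both exist since $a_{i_0},b_{i_0}$ are singular) and setting $z=vw^{\top}$ yields $a_{i_0}z=0$ and $zb_{i_0}=0$ at once, with $z\ne 0$. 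Embedding $z$ in coordinate $i_0$ then gives the required common neighbour (and $z$, having several degrees of freedom when $n\ge 2$, can be chosen distinct from $a$ and $b$), so $d(a,b)\le 2$. This rank-one trick is exactly where the simplicity of the factors enters, and I would flag that the statement as phrased for arbitrary finite factors is only safe once this annihilation property is secured.
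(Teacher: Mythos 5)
Your argument is correct and lands on the same coordinatewise characterization as the paper, but it takes a genuinely more self-contained route at the two places where the paper leans on outside input. For the bound $\diam(\Gamma(R))\le 3$ the paper cites an external theorem, whereas you prove it directly via the walk $a-c-d-b$ with $c$ and $d$ supported on single coordinates taken from the disjoint sets $U_a$ and $U_b$; this is clean and makes the whole lemma self-contained. For the reduction ``some coordinate $i_0$ carries two non-units $\Rightarrow d(a,b)\le 2$'', the paper produces a common neighbour supported on coordinate $i_0$ by invoking Lemma \ref{diam} ($\diam(\Gamma(M_n(F)))=2$), while you build one explicitly as the rank-one matrix $vw^{\top}$ with $a_{i_0}v=0$ and $w^{\top}b_{i_0}=0$; the two devices are equivalent in substance (your construction is essentially a proof of Lemma \ref{diam}), and the remaining content --- non-adjacency and the nonexistence of a common neighbour when every coordinate carries a unit --- is handled identically in both proofs. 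Your closing caveat is well taken and is the one point where you see something the paper's write-up does not make explicit: the statement as phrased for \emph{arbitrary} finite factors is not literally true, since it requires each $\Gamma(R_i)$ to have diameter at most $2$. (Take $l=1$ and $R_1=\ZZ_2\times\ZZ_4$: the stated criterion can never hold because both endpoints are non-units, yet $d((0,1),(1,2))=3$ in $\Gamma(\ZZ_2\times\ZZ_4)$.) The paper's own proof silently imports this hypothesis by citing Lemma \ref{diam}, which is harmless only because Lemma \ref{zerodivisorsproduct} is later applied exclusively to products of matrix rings over fields; your formulation, which isolates the needed annihilation property of the factors, is the more accurate one.
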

\begin{proof}
 It is well known that in a finite ring every element is either invertible or a zero-divisor. The first statement then follows from the fact that an element $(x_1, x_2, \ldots, x_l) \in R_1 \times R_2 \times \ldots \times R_l$ is invertible if and only if $x_i \in R_i$ is invertible for every $i=1,2,\ldots,l$, while the fact that $\diam(\Gamma(R)) \leq 3$ follows from \cite[Theorem 3.1]{MR2946298}.

 Now, choose arbitrary $a=(a_1,a_2,\ldots,a_l)$ and $b=(b_1,b_2,\ldots,b_l)$ in $Z(R)$. By the above paragraph, there exists $i \in \{1,2,\ldots,l\}$ such that $a_i \in Z(R_i)$. If $b_i \in Z(R_i)$ then there exists a path $a_i-x_i - b_i$ of distance at most 2 in $\Gamma(R_i)$  by Lemma \ref{diam}, so we have a path $a-(0,0,\ldots,0,x_i,0,\ldots,0)-b$ in $\Gamma(R)$, thus $d(a,b) \leq 2$.

Therefore, we can now suppose that for every $i \in \{1,2,\ldots,l\}$ we have either $a_i \in R_i^*$ or $b_i \in R_i^*$.  Obviously, in this case we have $ab=ba=0$ if and only if $a_j=0$ or $b_j=0$ for every $j \in \{1,2,\ldots,l\}$ and in this case $d(a,b)=1$. So, suppose now that there exists $x=(x_1,x_2,\ldots,x_l) \in Z(R)$ such that $a-x-b$ is a path in $\Gamma(R)$. Since for every $i \in \{1,2,\ldots,l\}$ either $a_i$ or $b_i$ is not a zero-divisor, this implies $x_i=0$ for every $i \in \{1,2,\ldots,l\}$, a contradiction. Therefore $d(a,b)=3$ in this case if and only if there exists $j \in \{1,2,\ldots,l\}$ such that $a_j \neq 0$ and $b_j \neq 0$.
\end{proof}

This now gives us the necessary tools to calculate the number of vertices that are at distance $3$ in the zero-divisor graph of a direct product of finite rings. 

\begin{lemma}
\label{distance3directproduct}
Let $l \in \NN$ and $R=R_1 \times R_2 \times \ldots \times R_l$ be a direct product of finite rings. Then $|\{(a,b); a,b \in V(\Gamma(R)) \text { such that } d(a,b)=3\}|=\left( T(R) - 2^l +2 \right)\prod_{i=1}^l{|R_i^*|}$, where $T(R)=\sum_{\Lambda \subseteq \{1,2,\ldots,l\}, |\Lambda| \geq 2}(2^{|\Lambda|}-2)\prod_{i=1}^l{k_i}$, where $k_i=|Z(R_i)|$ if $i \in \Lambda$ and $k_i=|R_i^*|$ otherwise.
\end{lemma}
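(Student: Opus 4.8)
The plan is to use the distance-$3$ characterization from Lemma \ref{zerodivisorsproduct} and reduce the count to a sum over coordinatewise ``types''. Recall that in a finite ring every element is either a unit or a zero-divisor, and the two classes are disjoint. Lemma \ref{zerodivisorsproduct} tells us that for vertices $a,b$ we have $d(a,b)=3$ precisely when (i) for each coordinate $i$ at least one of $a_i,b_i$ lies in $R_i^*$, and (ii) there is a coordinate $j$ with $a_j\neq 0$ and $b_j\neq 0$. Condition (i) forbids both entries from being non-units in any coordinate, so each coordinate $i$ falls into exactly one of three types: type $U$ (both $a_i,b_i\in R_i^*$), type $A$ ($a_i\in Z(R_i)$, $b_i\in R_i^*$), or type $B$ ($a_i\in R_i^*$, $b_i\in Z(R_i)$).

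Next I would record when such a pair actually consists of two vertices. The non-unit coordinates of $a$ are exactly its type-$A$ coordinates, so $a\in Z(R)$ iff there is at least one type-$A$ coordinate; symmetrically $b\in Z(R)$ iff there is at least one type-$B$ coordinate. Moreover, the presence of a type-$B$ coordinate (where $a_i$ is a unit, hence nonzero) forces $a\neq 0$, and dually $b\neq 0$. Hence, among the pairs satisfying (i), the vertex pairs are precisely those having at least one type-$A$ and at least one type-$B$ coordinate.

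I would first count all such vertex pairs satisfying (i), temporarily ignoring (ii). Let $\Lambda$ denote the set of non-$U$ coordinates (those of type $A$ or $B$); the remaining coordinates are type $U$. For a fixed $\Lambda$, each coordinate in $\Lambda$ is assigned type $A$ or type $B$, and the requirement of at least one $A$ and at least one $B$ excludes the two constant assignments, leaving $2^{|\Lambda|}-2$ choices (so only $|\Lambda|\geq 2$ contributes). For any one assignment the number of coordinate choices is $\prod_{i\in\Lambda}|Z(R_i)||R_i^*|$ (each type-$A$ or type-$B$ coordinate contributes $|Z(R_i)||R_i^*|$) times $\prod_{i\notin\Lambda}|R_i^*|^2$. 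Factoring one copy of $\prod_{i=1}^{l}|R_i^*|$ out of this product rewrites it as $\prod_i k_i$ with $k_i=|Z(R_i)|$ for $i\in\Lambda$ and $k_i=|R_i^*|$ otherwise, so summing over $\Lambda$ gives exactly $T(R)\prod_{i=1}^{l}|R_i^*|$.

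Finally I would subtract the vertex pairs satisfying (i) that violate (ii). If (ii) fails then no coordinate is type $U$ (a type-$U$ coordinate has both entries nonzero), so every coordinate is type $A$ or $B$; failure of (ii) then forces $a_j=0$ at each type-$A$ coordinate and $b_j=0$ at each type-$B$ coordinate. Splitting the coordinates into the type-$A$ block and the type-$B$ block (a partition into two nonempty parts, since we still need a vertex pair) gives $2^{l}-2$ possibilities, each contributing $\prod_{j=1}^{l}|R_j^*|$ coordinate choices. Subtracting $(2^{l}-2)\prod_i|R_i^*|$ from $T(R)\prod_i|R_i^*|$ yields the claimed formula. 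The step to get right is the bookkeeping for condition (ii): it is tempting to stop at $T(R)\prod_i|R_i^*|$, and the real content is recognizing the overcount as the clean term $(2^{l}-2)\prod_i|R_i^*|$.
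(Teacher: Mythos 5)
Your proposal is correct and follows essentially the same route as the paper: the same three-way coordinate classification (your types $U$, $A$, $B$ correspond exactly to the paper's complement of $\Lambda_a\cup\Lambda_b$, $\Lambda_a$, and $\Lambda_b$), the same grouping by $\Lambda=\Lambda_a\cup\Lambda_b$ with the $2^{|\Lambda|}-2$ factor to get $T(R)\prod_{i=1}^l|R_i^*|$, and the same final subtraction of the $(2^l-2)\prod_{i=1}^l|R_i^*|$ pairs violating condition (ii), which the paper identifies as the distance-one pairs inside $\Omega(\Gamma(R))$. No gaps.
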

\begin{proof}
By Lemma \ref{zerodivisorsproduct}, we know that for any elements $a=(a_1,a_2,\ldots,a_l), b=(b_1,b_2,\ldots,b_l) \in \Gamma(R)$ we have $d(a,b) = 3$ if and only if for every $i \in \{1,2,\ldots,l\}$ we have either $a_i \in R_i^*$ or $b_i \in R_i^*$ and there exists $j \in \{1,2,\ldots,l\}$ such that $a_j \neq 0$ and $b_j \neq 0$. 
Denote $\Omega(\Gamma(R))=\{(a,b); a,b \in V(\Gamma(R)) \text { and for every } j \in \{1,2,\ldots,l\}, \text { we have } a_j \in R_j^* \text { or } b_j \in R_j^*\}$. Since $a,b \in V(\Gamma(R))$, at least one of the components of $a$ and $b$ has to be a zero-divisor. Denote $\Lambda_a = \{j \in \{1,2,\ldots,l\}; a_j \text { is a zero-divisor} \}$ and similarly $\Lambda_b = \{j \in \{1,2,\ldots,l\}; b_j \text { is a zero-divisor} \}$. Observe that $\Lambda_a, \Lambda_b \neq \emptyset$ and $\Lambda_a \cap \Lambda_b = \emptyset$. 
It is well known that every element of a finite ring is either a unit or a zero-divisor. This implies that
$a_j \in R_j^*$ for every $j \notin \Lambda_a$ and $b_j \in R_j^*$ for every $j \notin \Lambda_b$. 
Therefore, $|\Omega(\Gamma(R))|=\sum_{\emptyset  \neq \Lambda_a, \Lambda_b \subseteq \{1,2,\ldots,l\},\Lambda_a \cap \Lambda_b = \emptyset}\prod_{i=1}^l{k_i |R_i^*|}$, where $k_i=|Z(R_i)|$ if $i \in \Lambda_a \cup \Lambda_b$ and $k_i=|R_i^*|$ otherwise. Denote $\Lambda=\Lambda_a \cup \Lambda_b$ and observe that $|\Lambda| \geq 2$ and that every $\Lambda$ can be obtained from $\Lambda_a$ and $\Lambda_b$ in $\sum_{t=1}^{|\Lambda| - 1}{|\Lambda| \choose t}=2^{|\Lambda|}-2$ ways. Thus, $|\Omega(\Gamma(R))|=T(R)\prod_{i=1}^l{|R_i^*|}$, for $T(R)=\sum_{\Lambda \subseteq \{1,2,\ldots,l\},|\Lambda| \geq 2}(2^{|\Lambda|}-2)\prod_{i=1}^l{k_i}$, where $k_i=|Z(R_i)|$ if $i \in \Lambda$ and $k_i=|R_i^*|$ otherwise. In order to find the number of elements at distance $3$, we now have to subtract the number of all elements from $|\Omega(\Gamma(R))|$ that are at distance $1$. So, choose $a=(a_1,a_2,\ldots,a_l), b=(b_1,b_2,\ldots,b_l) \in \Gamma(R)$ such that $(a,b) \in \Omega(\Gamma(R))$ and that $d(a,b) = 1$. 
Observe that this time we have  $\Lambda_a, \Lambda_b \neq \emptyset$, $\Lambda_a \cap \Lambda_b = \emptyset$ and $\Lambda_a \cup \Lambda_b = \{1,2,\ldots,l\}$. By observing that the set $\{1,2,\ldots,l\}$ can be obtained as a union of $\Lambda_a$ and $\Lambda_b$ in $\sum_{t=1}^{l - 1}{l \choose t}=2^{l}-2$ ways, we deduce that 
the number of elements from $\Omega(\Gamma(R))$ that are at distance $1$ is equal to $(2^l-2)\prod_{i=1}^l{|R_i^*|}$ and thus the result follows.
\end{proof}

Next, we shall need to calculate the sizes of left and right annihilators of matrices in a simple finite ring. 

\begin{lemma}
\label{annleftright}
Let $F$ be a finite field of cardinality $q$ and $n \geq 1$. Suppose $A \in M_n(F)$ is a matrix of rank $k$ for some $k \in \{0,1,\ldots,n\}$. Then $|\Annl(A)|=|\Annr(A)|=q^{n(n-k)}$ and furthermore $|\Annl(A) \cap \Annr(A)|=q^{(n-k)^2}$.
\end{lemma}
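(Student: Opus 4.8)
The plan is to compute each of the three quantities directly by interpreting the annihilator conditions as systems of linear equations over $F$. Since $A$ has rank $k$, I will first reduce to a canonical form: by choosing invertible matrices $P, Q \in GL_n(F)$ with $PAQ = \begin{pmatrix} I_k & 0 \\ 0 & 0 \end{pmatrix}$, and noting that left- and right-multiplication by invertible matrices are bijections on $M_n(F)$ that preserve the relevant annihilator conditions, I can assume without loss of generality that $A$ itself is the block matrix with an identity of size $k$ in the top-left corner and zeros elsewhere. This normalization is the key simplifying move, since the cardinalities $|\Annl(A)|$, $|\Annr(A)|$, and $|\Annl(A) \cap \Annr(A)|$ are invariant under $A \mapsto PAQ$.

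With $A$ in canonical form, I would compute $\Annr(A) = \{B : AB = 0\}$ by observing that $AB$ picks out the top $k$ rows of $B$, so $AB = 0$ forces exactly those $k$ rows to vanish while the remaining $n-k$ rows are free; this gives $q^{n(n-k)}$ matrices. Symmetrically, $\Annl(A) = \{B : BA = 0\}$ forces the left $k$ columns of $B$ to vanish, yielding the same count $q^{n(n-k)}$. For the intersection, a matrix $B$ must have its top $k$ rows and its left $k$ columns both zero, so the only free entries are those in the bottom-right $(n-k) \times (n-k)$ block, giving $q^{(n-k)^2}$. Each count is simply $q$ raised to the number of free entries in the appropriate coefficient matrix.

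I do not expect any genuine obstacle here; the result is essentially a rank–nullity computation dressed up in matrix-ring language, and the canonical-form reduction makes the counting transparent. The only point requiring a small amount of care is verifying that left- and right-multiplication by units genuinely preserves each annihilator \emph{as a set of a fixed cardinality} — specifically that $B \mapsto Q^{-1} B$ maps $\Annr(A)$ bijectively onto $\Annr(PAQ)$ and analogously for the other two quantities — but this is routine. I would also remark that the case $k = 0$ (where $A = 0$) and $k = n$ (where $A$ is invertible) are consistent with the formulas, giving $q^{n^2}$ and $q^0 = 1$ respectively, which serves as a useful sanity check and explains why the lemma is stated for the full range $k \in \{0, 1, \ldots, n\}$ rather than just $k \geq 1$ as in Lemma \ref{sizeofann}.
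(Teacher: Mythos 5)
Your proposal is correct and follows essentially the same route as the paper: reduce to the canonical form $PAQ=\left[\begin{smallmatrix} I_k & 0 \\ 0 & 0 \end{smallmatrix}\right]$, observe that multiplication by the units $P,Q$ gives bijections preserving each annihilator (the paper uses $B\mapsto BP^{-1}$ for $\Annl$ and $C\mapsto Q^{-1}CP^{-1}$ for the intersection, matching your remark), and count free entries. No gaps; your sanity check of the boundary cases $k=0$ and $k=n$ is a nice addition not present in the paper.
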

\begin{proof}
Since $A$ is a matrix of rank $k$, there exist invertible matrices (corresponding to a suitable changes in bases) $P, Q \in M_n(F)$ such that $PAQ=\left[\begin{matrix} I_k & 0 \\
0 & 0 \end{matrix}\right]$, where $I_k$ denotes the identity matrix of size $k$. Note that any matrix with the first $k$ rows equal to zero is in $\Annl(PAQ)$, so $|\Annl(PAQ)|=q^{n(n-k)}$. Furthermore, for any matrix $B \in M_n(F)$, we have $BA=0$ if and only if $(BP^{-1})(PAQ)=0$. Since the mapping $B \mapsto BP^{-1}$ is a bijection on $M_n(F)$, we get $|\Annl(A)|=q^{n(n-k)}$. The proof for $|\Annr(A)|$ is symmetrical. Also, for any  $C \in M_n(F)$, we have $CA=AC=0$ if and only if $(Q^{-1}CP^{-1})(PAQ)=(PAQ)(Q^{-1}CP^{-1})=0$. Since the mapping $C \mapsto Q^{-1}CP^{-1}$ is also a bijection on $M_n(F)$, we conclude that $|\Annl(A) \cap \Annr(A)|=|\Annl(PAQ) \cap \Annr(PAQ)|=q^{(n-k)^2}$.
\end{proof}

Now, let us examine the degrees of matrices in $\Gamma(R)$ in the semisimple case.

\begin{lemma}
\label{annsizes}
Let $n_1,n_2, \ldots, n_l \geq 1$ be integers, $F_1, F_2, \ldots, F_l$ finite fields of cardinalities $q_1, q_2, \ldots, q_l$ respectively, and $R=M_{n_1}(F_1) \times M_{n_2}(F_2) \times \ldots \times M_{n_l}(F_l)$. Suppose $A=(A_1,A_2,\ldots,A_l) \in R \setminus \{0\}$ where $A_i$ is of rank $k_i$ for every $i=1,2,\ldots,l$. Then $\degr(A)=\prod_{i=1}^l{q_i^{(n_i-k_i)^2}}  \left(2\prod_{i=1}^l{q_i^{k_i(n_i-k_i)}}-1\right)- \epsilon$, where $\epsilon=1$ if $A^2 \neq 0$ and $\epsilon=2$ if $A^2=0$.
\end{lemma}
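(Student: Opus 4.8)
The plan is to compute $|\Ann(A)|$ first and then pass to $\degr(A)$ by subtracting off the contributions of $0$ and, when $A^2=0$, of $A$ itself. Since $\Ann(A) = \Annl(A) \cup \Annr(A)$, inclusion--exclusion gives $|\Ann(A)| = |\Annl(A)| + |\Annr(A)| - |\Annl(A) \cap \Annr(A)|$, so it suffices to determine these three cardinalities.

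The key observation is that annihilators respect the direct product structure. For $y = (y_1, \ldots, y_l) \in R$ we have $yA = 0$ if and only if $y_i A_i = 0$ for every $i$, so $\Annl(A) = \prod_{i=1}^l \Annl(A_i)$, and likewise $\Annr(A) = \prod_{i=1}^l \Annr(A_i)$ and $\Annl(A) \cap \Annr(A) = \prod_{i=1}^l \left( \Annl(A_i) \cap \Annr(A_i) \right)$. Applying Lemma \ref{annleftright} componentwise (each $A_i$ being of rank $k_i$ in $M_{n_i}(F_i)$) then yields $|\Annl(A)| = |\Annr(A)| = \prod_{i=1}^l q_i^{n_i(n_i-k_i)}$ and $|\Annl(A) \cap \Annr(A)| = \prod_{i=1}^l q_i^{(n_i-k_i)^2}$. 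Combining these, $|\Ann(A)| = 2\prod_{i=1}^l q_i^{n_i(n_i-k_i)} - \prod_{i=1}^l q_i^{(n_i-k_i)^2}$. The only algebra then left is to match this with the claimed closed form: factoring out $\prod_{i=1}^l q_i^{(n_i-k_i)^2}$ and using the identity $(n_i-k_i)^2 + k_i(n_i-k_i) = n_i(n_i-k_i)$ in each factor rewrites $|\Ann(A)|$ as $\prod_{i=1}^l q_i^{(n_i-k_i)^2}\bigl(2\prod_{i=1}^l q_i^{k_i(n_i-k_i)} - 1\bigr)$.

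Finally, I would translate $|\Ann(A)|$ into $\degr(A)$. By definition $\degr(A)$ counts the elements of $\Ann(A) \setminus \{0, A\}$. Since $0 \in \Ann(A)$ always, while $A \in \Ann(A)$ precisely when $A^2 = 0$, removing these elements gives $\degr(A) = |\Ann(A)| - 1$ when $A^2 \neq 0$ and $\degr(A) = |\Ann(A)| - 2$ when $A^2 = 0$, i.e.\ $\degr(A) = |\Ann(A)| - \epsilon$ with $\epsilon$ exactly as stated.

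None of these steps is genuinely difficult; the computation is essentially a direct-product refinement of Lemma \ref{sizeofann}. The only point that requires care is the inclusion--exclusion combined with the correct treatment of whether $A$ lies in its own annihilator, which is precisely the origin of the $\epsilon$ term, so I would make sure to argue the $A^2=0$ versus $A^2 \neq 0$ dichotomy explicitly rather than folding it into the annihilator count.
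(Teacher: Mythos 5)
Your proposal is correct and follows essentially the same route as the paper: inclusion--exclusion on $\Annl(A)\cup\Annr(A)$, the observation that left and right annihilators factor over the direct product, an application of Lemma \ref{annleftright} componentwise, and the removal of $0$ (and of $A$ itself when $A^2=0$) to produce the $\epsilon$ term. The only cosmetic difference is that the paper separately notes the degenerate case where every $A_i$ is invertible, which your computation also covers since then $|\Ann(A)|=1$.
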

\begin{proof}
If $A$ is not a zero-divisor, then $\degr(A)=0$ by definition and note that our formula yields $\degr(A)=1-\epsilon$, where $\epsilon=1$, since $A^2 \neq 0$, therefore the statement holds in this case. 
Suppose therefore that $A$ is a zero-divisor, so there exists $B=(B_1,B_2,\ldots,B_l) \in R$ such that $AB=0$ or $BA=0$. Then $A_iB_i=0$ for every $i \in \{1,2,\ldots,l\}$ or $B_iA_i=0$ for every $i \in \{1,2,\ldots,l\}$. Therefore by the principle of inclusion and exclusion, $\degr(A)=\prod_{i=1}^l{|\Annl(A_i)|}+\prod_{i=1}^l{|\Annr(A_i)|}-\prod_{i=1}^l{|\Annl(A_i) \cap \Annr(A_i)|}-\epsilon$, where $\epsilon=1$ if $A^2 \neq 0$ ($0$ is not a vertex in $\Gamma(R)$) and $\epsilon=2$ if $A^2=0$ (there exists no edge between $A$ and $A$). Lemma \ref{annleftright} now yields 
\begin{multline*}
\degr(A)=2\prod_{i=1}^l{q_i^{n_i(n_i-k_i)}}-\prod_{i=1}^l{q_i^{(n_i-k_i)^2}} - \epsilon= \\
\prod_{i=1}^l{q_i^{(n_i-k_i)^2}}  \left(2\prod_{i=1}^l{q_i^{k_i(n_i-k_i)}}-1\right)- \epsilon.
\end{multline*}
\end{proof}

Now, we have everything ready to calculate the Wiener index in the semisimple case. 

\begin{theorem}
\label{mainsemisimple}
Let $n_1,n_2, \ldots, n_l \geq 1$ be integers, $F_1, F_2, \ldots, F_l$ finite fields of cardinalities $q_1, q_2, \ldots, q_l$ respectively, and $R=M_{n_1}(F_1) \times M_{n_2}(F_2) \times \ldots \times M_{n_l}(F_l)$. Then 
\begin{multline*}
W(\Gamma(R))=|Z(R)|^2 - \frac{5}{2}|Z(R)|+\frac{3}{2} + \\
\left( \frac{T(R)}{2} - 2^{l-1} +1 \right)\prod_{i=1}^l{q_i^{n_i \choose 2}\prod_{j_i=1}^{n_i}(q_i^{j_i}-1)} - \\
\frac{1}{2}\sum_{k_1,\ldots,k_l=0}^{n_1,\ldots,n_l}\left[\prod_{i=1}^l\left(\prod_{j_i=0}^{k_i-1}{\frac{(q_i^{n_i}-q_i^{j_i})^2}{q_i^{k_i}-q_i^{j_i}}}\right){\prod_{i=1}^lq_i^{(n_i-k_i)^2}}  \left(2\prod_{i=1}^l{q_i^{k_i(n_i-k_i)}}-1\right)\right] \\
 +\frac{1}{2}\prod_{i=1}^l{q_i^{n_i^2}} + \frac{1}{2}\prod_{i=1}^l\prod_{j_i=0}^{n_i-1}{(q_i^{n_i}-q_i^{j_i})} + \frac{1}{2}|N_2(R)|,
\end{multline*}
where the following equations hold: \\
$|Z(R)|=\prod_{i=1}^lq_i^{n_i^2}-\prod_{i=1}^lq_i^{n_i \choose 2}\prod_{j_i=1}^{n_i}(q_i^{j_i}-1)$, \\
$|N_2(R)|=\sum_{k_1=0}^{n_1-1}\sum_{k_2=0}^{n_2-1}\ldots\sum_{k_l=0}^{n_l-1}\prod_{i=1}^l\left(\frac{\prod_{j_i=0}^{2k_i-1} \left( q_i^{n_i}- q_i^{j_i}\right )}{q_i^{k_i^2}\prod_{j_i=0}^{k_i-1}\left( q_i^{k_i}-q_i^{j_i} \right )}\right) - 1$
and \\ $T(R)=\sum_{\Lambda \subseteq \{1,2,\ldots,l\}, |\Lambda| \geq 2}(2^{|\Lambda|}-2)\prod_{i=1}^l{\omega_i}$, where we have $\omega_i=q_i^{n_i^2}-q_i^{n_i \choose 2}\prod_{j_i=1}^{n_i}(q_i^{j_i}-1)$ if $i \in \Lambda$ and $\omega_i=q_i^{n_i \choose 2}\prod_{j_i=1}^{n_i}(q_i^{j_i}-1)$ otherwise.
\end{theorem}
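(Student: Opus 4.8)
The plan is to reuse the distance-counting strategy of Theorem \ref{mainsimple}, with the one essential difference that, by Lemma \ref{zerodivisorsproduct}, distances in $\Gamma(R)$ now take the values $1$, $2$ or $3$ instead of only $1$ or $2$. For $i \in \{1,2,3\}$ write $D_i$ for the number of ordered pairs $(a,b)$ of distinct vertices with $d(a,b)=i$. Since $\diam(\Gamma(R)) \leq 3$ we have $D_1+D_2+D_3 = |V(\Gamma(R))|(|V(\Gamma(R))|-1)$ and $W(\Gamma(R)) = \frac{1}{2}(D_1+2D_2+3D_3)$; eliminating $D_2$ yields
$$W(\Gamma(R)) = |V(\Gamma(R))|\bigl(|V(\Gamma(R))|-1\bigr) - \tfrac{1}{2}D_1 + \tfrac{1}{2}D_3.$$
As $|V(\Gamma(R))| = |Z(R)|-1$, the leading term equals $|Z(R)|^2 - 3|Z(R)| + 2$, so the work reduces to evaluating the number $D_1$ of ordered adjacent pairs and the number $D_3$ of pairs at distance $3$.

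First I would compute $D_1 = \sum_{A \in V(\Gamma(R))}\degr(A)$. Since every invertible matrix has degree $0$ and the formula of Lemma \ref{annsizes} returns $0$ on such elements, I may sum $\degr$ over all nonzero $A \in R$ without altering the value. Grouping the nonzero matrices by their rank profile $(k_1,\ldots,k_l)$ and counting each profile factor-by-factor with Lemma \ref{rankk}, the principal part of $\degr$ from Lemma \ref{annsizes} sums to exactly the double-indexed expression displayed in the statement, minus its value at the zero matrix, namely $\prod_{i=1}^l q_i^{n_i^2}$. The correction term contributes $\sum_{A \neq 0}\epsilon(A) = \bigl(\prod_{i=1}^l q_i^{n_i^2}-1\bigr) + |N_2(R)|$: here $\epsilon(A) = 1 + [A^2=0]$, and since a product matrix squares to zero exactly when each of its blocks does, the count of nonzero square-zero matrices factorizes over the $l$ components, each factor being supplied by Lemma \ref{sqzero}; this is precisely why $|N_2(R)|+1$ is a product and the global $-1$ appears. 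Assembling these pieces gives a closed form for $D_1$, hence for $-\frac{1}{2}D_1$.

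Next I would read off $D_3 = (T(R)-2^l+2)\prod_{i=1}^l |R_i^*|$ directly from Lemma \ref{distance3directproduct}, so that $\frac{1}{2}D_3 = \bigl(\frac{T(R)}{2}-2^{l-1}+1\bigr)\prod_{i=1}^l |R_i^*|$; using Lemma \ref{novertices} to write $\prod_i |R_i^*| = \prod_i q_i^{n_i \choose 2}\prod_{j_i=1}^{n_i}(q_i^{j_i}-1)$, this is already the $D_3$-summand of the claim. Substituting $-\frac{1}{2}D_1$ and $\frac{1}{2}D_3$ into the compact identity, the additive constants collapse to $\frac{3}{2}$. To reconcile the linear term $-3|Z(R)|$ with the stated coefficient $-\frac{5}{2}|Z(R)|$, I would invoke the identity $\prod_{i=1}^l q_i^{n_i^2} = |Z(R)| + \prod_{i=1}^l |R_i^*|$, which converts the surplus $\prod_i q_i^{n_i^2}$ produced by $D_1$ into the missing $\frac{1}{2}|Z(R)|$ together with the separate half-summands $\frac{1}{2}\prod_i q_i^{n_i^2}$ and $\frac{1}{2}\prod_i q_i^{n_i \choose 2}\prod_{j_i=1}^{n_i}(q_i^{j_i}-1)$ that appear in the theorem.

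I expect the only genuinely delicate step to be the bookkeeping at the degenerate rank profiles. The degree formula is valid only on vertices, so the zero matrix and the invertible matrices must be inserted into and removed from the global sums by hand, and the multiplicative structure of the square-zero count across the direct product, together with its compensating $-1$, has to be tracked with care. Beyond this, the argument is a substitution of the already-established Lemmas \ref{rankk}, \ref{annleftright}, \ref{annsizes}, \ref{sqzero} and \ref{distance3directproduct} followed by routine algebra, which I would not expand in detail.
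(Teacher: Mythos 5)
Your proposal is correct and follows essentially the same route as the paper: decompose the pair counts into $D_1$, $D_2$, $D_3$, eliminate $D_2$, take $D_3$ from Lemma \ref{distance3directproduct}, and compute $D_1$ by summing the degree formula of Lemma \ref{annsizes} over rank profiles via Lemma \ref{rankk}, with exactly the same corrections for the zero matrix, the units, and the $\epsilon$-terms (the latter producing $|N_2(R)|$ via Lemma \ref{sqzero}). The bookkeeping you flag as delicate is handled in the paper in precisely the way you describe, so no further comment is needed.
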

\begin{proof}
For $i=1,2,3$ denote $D_i=|\{ (a,b); a,b \in V(\Gamma(R)) \text { such that } d(a,b)=i\}|$ and observe that by Lemma \ref{zerodivisorsproduct}, for every pair of (distinct) vertices $A, B \in V(\Gamma(R))$, we either have $d(A,B)=1$, $d(A,B)=2$ or $d(A,B)=3$, so $D_1+D_2+D_3=|V(\Gamma(R))|(|V(\Gamma(R))|-1)$.
Therefore $W(\Gamma(R))=\frac{1}{2}(D_1+2D_2+3D_3)=\frac{1}{2}(D_1+3D_3+2(|V(\Gamma(R))|(|V(\Gamma(R))|-1)-D_1-D_3))=|Z(R)|^2-3|Z(R)|+2+\frac{1}{2}(D_3-D_1)$.

Furthermore, we apply Lemma \ref{distance3directproduct}, and get $$D_3=\left( T(R) - 2^l +2 \right)\prod_{i=1}^l{|R_i^*|}= \left( T(R) - 2^l +2 \right)\prod_{i=1}^l{q_i^{n_i \choose 2}\prod_{j_i=1}^{n_i}(q_i^{j_i}-1)},$$ 
where $T(R)=\sum_{\Lambda \subseteq \{1,2,\ldots,l\}, |\Lambda| \geq 2}(2^{|\Lambda|}-2)\prod_{i=1}^l{\omega_i}$ and furthermore $\omega_i=q_i^{n_i^2}-q_i^{n_i \choose 2}\prod_{j_i=1}^{n_i}(q_i^{j_i}-1)$ if $i \in \Lambda$ and $\omega_i=q_i^{n_i \choose 2}\prod_{j_i=1}^{n_i}(q_i^{j_i}-1)$ otherwise.
On the other hand, $D_1=\sum_{0 \neq A \in M_n(R)}\degr(A)$, so Lemmas \ref{rankk} and \ref{annsizes} yield (we have to sum over all matrices $A=(A_1,A_2,\ldots,A_l)$, where at least one of the $A_i$ is non-zero and at least one of the $A_i$ is not of full rank)
\begin{multline*}
D_1=\sum_{k_1,\ldots,k_l=0}^{n_1,\ldots,n_l}\left[\prod_{i=1}^l\left(\prod_{j_i=0}^{k_i-1}{\frac{(q_i^{n_i}-q_i^{j_i})^2}{q_i^{k_i}-q_i^{j_i}}}\right){\prod_{i=1}^lq_i^{(n_i-k_i)^2}}  \left(2\prod_{i=1}^l{q_i^{k_i(n_i-k_i)}}-1\right)\right] \\
  -\prod_{i=1}^l{q_i^{n_i^2}} - \prod_{i=1}^l\prod_{j_i=0}^{n_i-1}{(q_i^{n_i}-q_i^{j_i})} - (|Z(R)|-1) - |N_2(R)|,
\end{multline*}
where
$N_2(R)=\{0 \neq A \in R; A^2=0\}$.
Obviously, for $A=(A_1,A_2, \ldots, A_l)$ we have $A^2=0$ if and only if $A_i^2=0$ for every $i=1,2,\ldots,l$.  
With the use of Lemma \ref{sqzero}, we quickly see that 
$$|N_2(R)|=\sum_{k_1,\ldots,k_l=0}^{n_1-1,\ldots,n_l-1}\prod_{i=1}^l\left(\frac{\prod_{j_i=0}^{2k_i-1} \left( q_i^{n_i}- q_i^{j_i}\right )}{q_i^{k_i^2}\prod_{j_i=0}^{k_i-1}\left( q_i^{k_i}-q_i^{j_i} \right )}\right) - 1.$$
If we collect all this together, we finally arrive at
\begin{multline*}
W(\Gamma(R))=|Z(R)|^2 - \frac{5}{2}|Z(R)|+\frac{3}{2} + 
\left( \frac{T(R)}{2} - 2^{l-1} +1 \right)\prod_{i=1}^l{q_i^{n_i \choose 2}\prod_{j_i=1}^{n_i}(q_i^{j_i}-1)} - \\
\frac{1}{2}\sum_{k_1,\ldots,k_l=0}^{n_1,\ldots,n_l}\left[\prod_{i=1}^l\left(\prod_{j_i=0}^{k_i-1}{\frac{(q_i^{n_i}-q_i^{j_i})^2}{q_i^{k_i}-q_i^{j_i}}}\right)\prod_{i=1}^l{q_i^{(n_i-k_i)^2}}  \left(2\prod_{i=1}^l{q_i^{k_i(n_i-k_i)}}-1\right)\right] \\ 
+ \frac{1}{2}\prod_{i=1}^l{q_i^{n_i^2}} 
+ \frac{1}{2}\prod_{i=1}^l\prod_{j_i=0}^{n_i-1}{(q_i^{n_i}-q_i^{j_i})} + \frac{1}{2}|N_2(R)|.
\end{multline*}
Finally, by Lemma \ref{novertices}, we also have $$|Z(R)|=|R|-|R^*|=\prod_{i=1}^lq_i^{n_i^2}-\prod_{i=1}^lq_i^{n_i \choose 2}\prod_{j_i=1}^{n_i}(q_i^{j_i}-1),$$ which proves the assertion of this theorem.
\end{proof}

Let us illustrate the above theorem in a special case of product of two matrix rings of size $2$.

\begin{example}
Let $F_1, F_2$ be finite fields and denote $x=|F_1|$ and $y=|F_2|$.
Then Theorem \ref{mainsemisimple} together with some tedious calculations yield that $W(\Gamma(M_{2}(F_1) \times M_{2}(F_2)))=x^8y^6 + 3x^7y^7 + x^6y^8 + 2x^8y^5+ 2x^5y^8 - x^8y^4 - 9x^7y^5 - x^6y^6 - 9x^5y^7 - x^4y^8 - 2x^8y^3 + 3x^7y^4 - 2x^6y^5 - 2x^5y^6 + 3x^4y^7 - 2x^3y^8 + x^8y^2 + 6x^7y^3 + x^6y^4 + 22x^5y^5 + x^4y^6 + 6x^3y^7 + x^2y^8 - 3x^7y^2 + 2x^6y^3 - 10x^5y^4 - 10x^4y^5 + 2x^3y^6 - 3x^2y^7 - x^6y^2 - 12x^5y^3 - \frac{5}{2}x^4y^4 - 12x^3y^5 -x^2y^6 + 9x^5y^2 + 8x^4y^3 + 8x^3y^4 + 9x^2y^5 - x^5y - x^4y^2 + 6x^3y^3 - x^2y^4 - xy^5 - x^4y - 6x^3y^2 - 6x^2y^3 -xy^4 + \frac{5}{2}x^2y^2 + xy + 1$. 
In particular, we can calculate for example that
$W(\Gamma(M_2(\ZZ_2) \times M_2(\ZZ_2)))=49005$ and $W(\Gamma(M_2(\ZZ_6)))=1089421$.
\end{example}

Finally, let us also examine the Wiener complexity in the semisimple case. Note that this bound is again dependent only on the size of matrices and not on the size of the underlying fields.

\begin{theorem}
\label{complexsemisimple}
Let $n_1,n_2, \ldots, n_l \geq 1$ be integers, $F_1, F_2, \ldots, F_l$ finite fields and $R=M_{n_1}(F_1) \times M_{n_2}(F_2) \times \ldots \times M_{n_l}(F_l)$. Then 
$$C_W(\Gamma(R))\leq \prod_{i=1}^{l}n_i + \prod_{i=1}^{l}(n_i+1) -3.$$
\end{theorem}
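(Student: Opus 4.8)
The plan is to reduce the count of distinct transmissions to a count of realizable ``data tuples.'' Specifically, I will show that $\Tr(A)$ is completely determined by the pair consisting of the rank tuple $(\rk(A_1),\dots,\rk(A_l))$ of $A$ together with the single bit recording whether $A^2=0$; the bound then drops out by counting how many such pairs can occur.

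First, by Lemma \ref{zerodivisorsproduct} we have $\diam(\Gamma(R))\le 3$, so for a fixed vertex $A$ every other vertex lies at distance $1$, $2$, or $3$. Writing $t_3(A)$ for the number of vertices at distance $3$ from $A$ and recalling that exactly $\degr(A)$ vertices lie at distance $1$, I obtain
$$\Tr(A)=\degr(A)+2\bigl(|V(\Gamma(R))|-1-\degr(A)-t_3(A)\bigr)+3t_3(A)=2\bigl(|V(\Gamma(R))|-1\bigr)-\degr(A)+t_3(A).$$
Since $|V(\Gamma(R))|$ is constant, $\Tr(A)$ is determined by the quantity $t_3(A)-\degr(A)$.

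Next I will argue that both terms depend only on the claimed data. By Lemma \ref{annsizes}, $\degr(A)$ is a function of $(\rk(A_1),\dots,\rk(A_l))$ and of $\epsilon\in\{1,2\}$, i.e. of whether $A^2=0$. For $t_3(A)$, the characterization in Lemma \ref{zerodivisorsproduct} shows that $d(A,B)=3$ depends on $A$ only through, for each coordinate $i$, whether $A_i$ is invertible and whether $A_i=0$; both of these are read off from $\rk(A_i)$ (invertible iff $\rk(A_i)=n_i$, zero iff $\rk(A_i)=0$). Hence the entire set of distance-$3$ neighbours of $A$, and in particular its size $t_3(A)$, depends only on the rank tuple. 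Combining, $\Tr(A)$ is a function of the pair $\bigl((\rk(A_1),\dots,\rk(A_l)),\epsilon\bigr)$, so $C_W(\Gamma(R))$ is bounded by the number of such pairs realized by some vertex. Finally I count these pairs. If $A^2\ne0$ ($\epsilon=1$), the rank tuple may be any vertex tuple, i.e. any $(k_1,\dots,k_l)$ with $0\le k_i\le n_i$ other than the all-zero tuple (where $A=0$) and the all-$n_i$ tuple (where $A$ is invertible); each such tuple is realized with $A^2\ne0$ by placing an idempotent of the prescribed rank in a nonzero coordinate, giving $\prod_{i=1}^l(n_i+1)-2$ pairs. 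If $A^2=0$ ($\epsilon=2$), then $A_i^2=0$ for every $i$ forces each $A_i$ to be non-invertible, so $k_i\le n_i-1$, and $A\ne0$ excludes the all-zero tuple; this leaves at most $\prod_{i=1}^l n_i-1$ pairs. Adding the two contributions yields $C_W(\Gamma(R))\le \prod_{i=1}^l n_i+\prod_{i=1}^l(n_i+1)-3$.

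The main obstacle is the middle step: establishing that $t_3(A)$ is genuinely a function of the rank tuple alone. The rest is bookkeeping, but this invariance is exactly what collapses the enormous vertex set into the small family of counted data tuples, and it must be extracted carefully from the distance-$3$ criterion of Lemma \ref{zerodivisorsproduct}.
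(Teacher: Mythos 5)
Your proof is correct and follows essentially the same strategy as the paper: write $\Tr(A)=2(|V(\Gamma(R))|-1)-\degr(A)+t_3(A)$, observe via Lemmas \ref{zerodivisorsproduct} and \ref{annsizes} that both $\degr(A)$ and $t_3(A)$ are determined by the rank tuple $(\rk(A_1),\dots,\rk(A_l))$ together with whether $A^2=0$, and then count the admissible pairs. The only difference is bookkeeping: you partition the count by the square-zero bit to get $\bigl(\prod_{i=1}^l(n_i+1)-2\bigr)+\bigl(\prod_{i=1}^l n_i-1\bigr)$ directly (and you justify the rank-tuple invariance of $t_3(A)$ abstractly rather than through the paper's explicit case-by-case formulas for $D_3(A)$ and its elementary-symmetric-polynomial evaluation of $S(R)$), but both routes yield the same bound $\prod_{i=1}^l n_i+\prod_{i=1}^l(n_i+1)-3$.
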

\begin{proof}
Choose $0 \neq A \in Z(R)$. Denote $D_3(A)=|\{B \in Z(R); d(A,B)=3 \}|$. Then by Lemma \ref{zerodivisorsproduct}, we have $\Tr(A)=\degr(A)+2(|V(\Gamma(R))|-1 - \degr(A) - D_3(A))+3D_3(A)=2(|Z(R)|-2) - \degr(A) + D_3(A)$. Let $A=(A_1,A_2, \ldots, A_l)$ with $\rk(A_i)=k_i$ for every $i=1,2,\ldots,l$. Suppose firstly that there exists $i \in \{1,2,\ldots,l\}$ such that $1 \leq k_i \leq n_i-1$. Let us denote $q_i=|F_i|$ for $i=1,2,\ldots,l$. Observe that by Lemma \ref{rankk} and Lemma \ref{zerodivisorsproduct}, $D_3(A)=\prod_{i=1}^{l}\omega_i$, where $\omega_i=q_i^{n_i^2}$ if $k_i=n_i$ and $\omega_i=\prod_{j_i=0}^{n_i-1}{\frac{(q_i^{n_i}-q_i^{j_i})^2}{q_i^{n_i}-q_i^{j_i}}}$ otherwise. On the other hand, if $k_i \in \{0,n_i\}$ for every $i \in \{1,2,\ldots,l\}$, then let $\Lambda_1=\{i \in \{1,2,\ldots,l\}; k_i=0\}$ and $\Lambda_2=\{1,2,\ldots,\l\} \setminus \Lambda_1$. Observe that in this case, $D_3(A)=\left(\prod_{i \in \Lambda_1}\prod_{j_i=0}^{n_i-1}{\frac{(q_i^{n_i}-q_i^{j_i})^2}{q_i^{n_i}-q_i^{j_i}}}\right)\left(-1+ \prod_{i \in \Lambda_2}{q_i^{n_i^2}} \right)$.
Furthermore, by Lemma \ref{annsizes}, $\degr(A)=\prod_{i=1}^l{q_i^{(n_i-k_i)^2}}  \left(2\prod_{i=1}^l{q_i^{k_i(n_i-k_i)}}-1\right)- \epsilon$, where $\epsilon=1$ if $A^2 \neq 0$, and $\epsilon=2$ if $A^2=0$.
This implies that $\Tr(A)$ is dependent only on the numbers $k_1,k_2,\ldots,k_l$ and the on the fact whether $A^2$ is zero or not. Since $A \in Z(R) \setminus \{0\}$, at least one of the numbers $k_1,k_2,\ldots,k_l$ has to be non-zero and there has to exist $i \in \{1,2,\ldots,l\}$ such that $k_i \leq n_i-1$. Since $A^2=0$ implies $A_i^2=0$ for every $i \in \{1,2,\ldots,l\}$, and since for every vector $(k_1, k_2, \ldots, k_l) \neq 0^l$ such that $k_i \leq n_i-1$ for every $i$ there exist matrices $B=(B_1,B_2,\ldots,B_l), C=(C_1,C_2,\ldots,C_l) \in Z(R) \setminus \{0\}$ such that $\rk(B_i)=\rk(C_i)=k_i$ for every $i \in \{1,2,\ldots,l\}$ and $B^2=0$, while $C^2 \neq 0$, this implies that $C_W(\Gamma(R)) \leq 2\left(\prod_{i=1}^{l}n_i - 1\right) + S(R)-1$, where $S(R)$ denotes the number of different vectors $(k_1,k_2,\ldots,k_l) \in \NN^l$ such that $k_i \geq 0$ for every $i \in \{1,2,\ldots,l\}$ and there exists $j \in \{1,2,\ldots,l\}$ such that $k_j=n_j$. Let us calculate $S(R)$. Denote $x_i=n_i+1$ for every $i \in \{1,2,\ldots,l\}$ and observe that by the inclusion-exclusion principle we have $S(R)=e_{l-1}(x_1,x_2,\ldots,x_l)-e_{l-2}(x_1,x_2,\ldots,x_l)+\ldots+(-1)^{l-2}e_1(x_1,x_2,\ldots,x_l)+(-1)^{l-1}$, where
$e_i(x_1,x_2,\ldots,x_l)$ denotes the $i$-th elementary symmetrical polynomial in variables $x_1,x_2,\ldots,x_l$. We further calculate that $S(R)=(-1)^{l-1}((-1)^{l-1}e_{l-1}(x_1,x_2,\ldots,x_l)+(-1)^{l-2}e_{l-2}(x_1,x_2,\ldots,x_l)+\ldots+(-1)^{1}e_1(x_1,x_2,\ldots,x_l)+1)=
(-1)^{l-1}((-1)^le_l(x_1,x_2,\ldots,x_l)+ \\ (-1)^{l-1}e_{l-1}(x_1,x_2,\ldots,x_l)+(-1)^{l-2}e_{l-2}(x_1,x_2,\ldots,x_l)+\ldots+ \\ (-1)^{1}e_1(x_1,x_2,\ldots,x_l)+1)+e_l(x_1,x_2,\ldots,x_l)$. By the well known identity for the symmetric polynomials, this yields 
$$S(R)=(-1)^{l-1}\prod_{i=1}^l(1-x_i)+e_l(x_1,x_2,\ldots,x_l)=-\prod_{i=1}^ln_i + \prod_{i=1}^l(n_i+1).$$ This finally gives us
$C_W(\Gamma(R)) \leq \prod_{i=1}^{l}n_i + \prod_{i=1}^{l}(n_i+1) -3$.
\end{proof}

\acknowledgment{The author acknowledges the financial support from the Slovenian Research Agency  (research core funding No. P1-0222).}

\bibliographystyle{amsplain}
\bibliography{biblio}

\end{document}